\newtheorem{definition}{Definition}
\newtheorem{theorem}[definition]{Theorem}
\newtheorem{question}[definition]{Question}
\newtheorem{corollary}[definition]{Corollary}
\newtheorem{lemma}[definition]{Lemma}
\newcommand{\comment}[1]{}
\newcommand{\N}{\mathbb N}
\newcommand{\Z}{\mathbb Z}
\title{Ends and vertices of small degree in infinite minimally $k$-(edge)-connected graphs}
\author{Maya Stein\footnote{This work was financed by Fondecyt grant Iniciaci\'on a Investigaci\'on no. 11090141.}}
\date{23.11.2010}
\begin{document}
\maketitle

\begin{abstract}
Bounds on
the minimum degree and  on the number of vertices  attaining it have been much studied for finite edge-/vertex-minimally $k$-connected/$k$-edge-connected graphs. We give an overview of the results known for finite graphs, and show that most of these  carry over to infinite graphs  if we consider  ends of small degree as well as  vertices.
\end{abstract}

\vskip1cm

\section{Introduction}
\subsection{The situation in finite graphs}
Four notions of minimality will be of interest in this paper. 
 For $k\in\N$, call a graph $G$ {\em edge-minimally $k$-connected}, resp.~{\em edge-minimally $k$-edge-connected} if $G$ is $k$-connected resp.~$k$-edge-connected, but $G-e$ is not, for every edge $e\in E(G)$. Analogously, call $G$  {\em vertex-minimally $k$-connected}, resp.~{\em vertex-minimally $k$-edge-connected} if $G$ is $k$-connected resp.~$k$-edge-connected, but $G-v$ is not, for every vertex $v\in V(G)$.
 These four classes of graphs often appear in the literature under the names of $k$-minimal/$k$-edge-minimal/$k$-critical/$k$-edge-critical graphs.

It is known that finite  graphs which belong to one of the classes defined above have vertices of small degree. In fact, in three of the four cases the trivial lower bound of $k$ on the minimum degree is attained. We summarise the known results in the following theorem (some of these results, and similar ones  for digraphs, also appear in~\cite{BBExtGT,frankHofC}):

\begin{theorem}\label{thm:fin}
 Let $G$ be a finite graph, let $k\in\N$. 
\begin{enumerate}[(a)]
 \item {\bf (Halin~\cite{halinAtheorem})} If $G$ is edge-minimally $k$-connected, then $G$ has a vertex of degree $k$.
  \item {\bf (Lick et al~\cite{lick}, Mader~\cite{maderAtome})} If $G$ is vertex-minimally $k$-connected, then $G$ has a vertex of degree at most $\frac 32k-1$.
   \item {\bf  (Lick~\cite{lickline})} If $G$ is edge-minimally $k$-edge-connected, then $G$ has a vertex of degree $k$.
     \item {\bf ({Mader~\cite{maderKritischKanten}})} If $G$ is vertex-minimally $k$-edge-connected, then $G$ has a vertex of degree $k$.
\end{enumerate}
\end{theorem}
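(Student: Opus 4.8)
The statement collects four distinct classical theorems, so I would prove each separately; however, parts (a), (c), (d) all share a single engine, namely an extremal choice of a minimum ``tight'' set followed by an uncrossing argument driven by the relevant minimality hypothesis. Consider (a) first. I would reduce to $|V(G)|\ge k+2$ (otherwise $G=K_{k+1}$ and every vertex has degree $k$) and note that vertex-minimality forces $\kappa(G)=k$. The key observation is that edge-minimality puts every vertex into a $k$-separator: for an edge $e=xy$, a separator $S$ of $G-e$ with $|S|\le k-1$ must leave $x,y$ in different components, and since every degree is at least $k$ the set $S\cup\{x\}$ is a $k$-separator of $G$. Calling a component $F$ of $G-T$, for a $k$-separator $T$, a fragment, one has $N(F)=T$ for every fragment because $\kappa(G)=k$. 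The plan is to pick a fragment $F$ of minimum order: if $|F|=1$ its vertex has degree $k$; otherwise I take an edge inside $F$, use its criticality to produce a second $k$-separator crossing $F$, and uncross the two fragments via the submodular inequality $|N(A\cap B)|+|N(A\cup B)|\le |N(A)|+|N(B)|$ to build a fragment strictly smaller than $F$, a contradiction. The main obstacle is handling the degenerate corners of the uncrossing (an empty corner, or one swallowed by its own neighbourhood), where the bound $|N(\cdot)|\ge k$ must be invoked carefully.

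Part (c) is the cleanest instance of this scheme. I would work with the cut function $d(X)=|E(X,V\setminus X)|$, which is both submodular and posimodular; $k$-edge-connectivity says $d(X)\ge k$ for every proper nonempty $X$, and edge-minimality says every edge crosses some tight set $X$ (one with $d(X)=k$). I choose a tight set $X$ of minimum order. If $|X|\ge 2$, a degree count shows $G[X]$ has an edge $uv$ (an edgeless $X$ would give $d(X)\ge 2k$); criticality supplies a tight set $Y$ separating $u$ from $v$, and posimodularity, $d(X)+d(Y)\ge d(X\setminus Y)+d(Y\setminus X)$, forces $d(X\setminus Y)=k$ with $X\setminus Y\subsetneq X$, a contradiction (after first disposing of the trivial case $Y\subseteq X$). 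Hence $|X|=1$ and its vertex has degree $k$.

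Part (d) follows the same philosophy but is the heaviest of the three ``degree $k$'' cases, because here it is a \emph{vertex} deletion, not an edge deletion, that witnesses minimality. For each vertex $w$, the failure of $k$-edge-connectivity of $G-w$ yields a set $Y$ with $d_{G-w}(Y)\le k-1$, and since $d_G(Y)\ge k$ the deleted vertex $w$ must send at least one edge to each side. I would again take a minimum-order deficient set, over all vertices $w$ and all their witnesses, and uncross it; but now each application of submodularity has to bookkeep the edges incident to the deleted vertex, since $d_{G-w}$ and $d_G$ differ exactly by those edges. The main obstacle is precisely this accounting: showing that the minimum deficient set collapses onto the neighbourhood of a single vertex of degree $k$, rather than merely shrinking.

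Part (b) is genuinely different, and the weaker bound $\frac{3}{2}k-1$ reflects this. As before $\kappa(G)=k$ and vertex-minimality places every vertex in a $k$-separator, so fragments exist; let $F$ be one of minimum order with $T=N(F)$ and $|T|=k$. Every neighbour of a vertex $v\in F$ lies in $F\cup T$, so $\deg(v)\le (|F|-1)+k$, and it therefore suffices to prove that the smallest fragment satisfies $|F|\le k/2$. This inequality is the heart of the matter: assuming $|F|>k/2$ I would take a crossing $k$-separator furnished by criticality and uncross it against $T$ with the neighbourhood submodularity inequality, using the hypothesis that all degrees exceed $\frac{3}{2}k-1$ to guarantee that every corner of the uncrossing is a legitimate fragment, hence not smaller than $F$, which yields a contradiction. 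Balancing the two corner sizes against $|T|=k$ is exactly what produces the factor $\frac{3}{2}$, and getting that balance right, rather than any single inequality, is where I expect the real difficulty to lie.
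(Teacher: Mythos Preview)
Your plan is sound and, where the paper actually supplies arguments, aligns closely with it. Note first that the paper does not prove Theorem~\ref{thm:fin} as a standalone result: part~(a) is only cited, while parts~(b), (c), (d) drop out of Lemmas~\ref{lem:finReg}, the finite core of the proof of Theorem~\ref{thm:infQuant}(c), and Lemmas~\ref{lem:findthevertex}--\ref{lem:vefin}, respectively. In each of these the paper runs precisely the ``pick a minimal tight set/region, then uncross'' scheme you describe. For~(c) your posimodularity argument is essentially the paper's computation $|\partial_e(A\setminus A_H)|+|\partial_e(B\setminus B_H)|\le 2|\partial_e H|+2|F|-|\partial_e A_H|-|\partial_e B_H|<2k$; for~(b) the paper's Lemma~\ref{lem:finReg} carries out exactly the four-corner uncrossing you sketch, deriving $|X|+|T^*|/2\le k/2$ for one side $X$ and reading off $\deg(v)\le k+|X|-1\le\frac32k-1$---this is your ``$|F|\le k/2$'' bound in disguise, and your anticipated difficulty (balancing the corners) is where the paper's inequality~\eqref{eq:2} does the work.

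The one place where the paper's route differs noticeably from yours is part~(d). You propose to uncross a minimum deficient set while bookkeeping the edges at the deleted vertex; the paper instead isolates a separate counting lemma (Lemma~\ref{lem:findthevertex}): if a component $C$ of $G-S$ with $S\subseteq V\cup E$, $|S|\le k$, satisfies $|C|\le|S_E|$, then $C$ already contains a vertex of degree $\le k$. This short counting step disposes of the case where every vertex of the minimal region $C-x$ sees the outside, and only the remaining case (some $y\in C-x$ with $N(y)\subseteq C$) needs the uncrossing you describe. Your approach would work, but the paper's split into ``counting case'' plus ``uncrossing case'' avoids some of the edge bookkeeping you flag as the main obstacle.
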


Note that in Theorem~\ref{thm:fin} (b), the bound of $3k/2-1$ on the degree is best possible. For even $k$, this can be seen by replacing each vertex of  $C_\ell$, a circle of some length $\ell\geq 4$, with a copy of $K^{k/2}$, the complete graph on $k/2$ vertices, and adding all edges between two copies of $K^{k/2}$ when the corresponding vertices of $C_\ell$ are adjacent. This procedure is sometimes called the strong product\footnote{The {\em strong product} of two graphs $H_1$ and $H_2$ is defined in~\cite{nesetrilHomo} as the graph on $V(H_1)\times V(H_2)$ which has an edge $(u_1,u_2)(v_1,v_2)$ whenever $u_{i}v_{i}\in E(H_{i})$ for $i=1$ or $i=2$, and at the same time either $u_{3-i}=v_{3-i}$ or $u_{3-i}v_{3-i}\in E(H_{3-i})$.}
 of $C_\ell$ and $K_{k/2}$. For odd values of $k$  similar examples can be constructed, using $K^{(k+1)/2}$'s instead of $K^{k/2}$'s, and in the end deleting two vertices which belong to two adjacent copies of $K^{(k+1)/2}$.

In all four cases of Theorem~1, the minimum degree is attained by more than one vertex\footnote{We remark that for uniformity of the results to follow, we do not consider the trivial graph $K^1$ to be $1$-edge-connected/$1$-connected.}. For convenience let $V_n=V_n(G)$ denote the set of all vertices of a graph $G$ that have degree at most $n$.

\begin{theorem}\label{thm:finQuant}
 Let $G$ be a finite graph, let $k\in\N$. 
\begin{enumerate}[(a)]
  \item {\bf (Mader~\cite{maderEckenVom})} In case (a) of Theorem~\ref{thm:fin},  $|V_k|\geq c_k|G|$, where $c_k>0$ is a constant depending only on $k$, unless $k=1$, in which case $|V_k|\geq 2$.
 \item {\bf (Hamidoune~\cite{hamidoune})} In case (b) of Theorem~\ref{thm:fin},  $|V_{3k/2-1}|\geq 2$.
  \item {\bf (Mader~\cite{maderMinimaleNfachKanten,maderMonats})} In case (c) of Theorem~\ref{thm:fin}, $|V_k|\geq c'_k |G|$, where $c'_k>0$ is a constant depending only on~$k$, unless $k=1$ or $k=3$, in which case $|V_k|$ is at least $2$ resp.~$4$.
 \item {\bf (Mader~\cite{maderKritischKanten})} In case (d) of Theorem~\ref{thm:fin}, $|V_k|\geq 2$.
\end{enumerate}
\end{theorem}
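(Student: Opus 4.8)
The plan is to treat all four parts through a common toolkit of \emph{fragments} and \emph{atoms}, splitting the work into a counting argument for the linear bounds (a), (c) and a ``two disjoint atoms'' argument for the constant bounds (b), (d). Recall that in a $k$-connected graph a fragment is a set $F$ with $\emptyset\neq F$, $V(G)\setminus(F\cup N(F))\neq\emptyset$ and $|N(F)|=k$, and an atom is a fragment of least cardinality; the edge-connectivity analogues (edge-fragments cut off by $k$ edges, and edge-atoms) serve for (c), (d). Throughout I would use the minimality hypothesis in the form: deleting the relevant object (an edge for (a), (c), a vertex for (b), (d)) produces a separation of order $<k$, which locates a fragment whose separator sits on the deleted object. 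The two uncrossing principles I rely on are the submodularity of $F\mapsto|N(F)|$ for vertex-connectivity and of $F\mapsto|\partial F|$ for edge-connectivity.

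For part (a) the core is a structural claim I would establish first: \emph{in an edge-minimally $k$-connected graph the vertices of degree exceeding $k$ induce a forest.} Granting this, set $W=V(G)\setminus V_k$, so $G[W]$ has at most $|W|-1$ edges; moreover every vertex of $W$ has degree $\geq k+1$, while every vertex of $V_k$ has degree exactly $k$ (Theorem~\ref{thm:fin}(a) gives $V_k\neq\emptyset$, and $k$-connectivity forces minimum degree $k$). Counting the edges incident with $W$,
\[
(k+1)|W| \;\leq\; \sum_{w\in W}\deg(w) \;=\; 2\,e(G[W]) + e(W,V_k) \;\leq\; 2(|W|-1) + k|V_k|,
\]
which rearranges to $(k-1)|W|\leq k|V_k|-2$ and hence, for $k\geq 2$, to $|V_k|\geq\frac{(k-1)|G|+2}{2k-1}$, giving $c_k=\frac{k-1}{2k-1}$; for $k=1$ the graph is a tree and the two leaves settle the claim. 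Part (c) I expect to run identically once the edge-connectivity analogue of the forest claim is in hand, the sporadic failures at $k=1,3$ reflecting that the forest bound degenerates for small $k$ and must be replaced by an ad hoc count.

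For the constant bounds (b) and (d) the goal is to exhibit \emph{two} vertices of small degree, and the natural device is two disjoint atoms. In (b) I would use submodularity to force the dichotomy: either some atom $A$ has $|A|\geq 2$, or two disjoint atoms exist. In the first case every vertex of $A$ has all its neighbours in $A\cup N(A)$, so has degree at most $|A|-1+k$; bounding $|A|\leq k/2$ (the point at which minimality makes the separation efficient) yields degree $\leq\tfrac32k-1$ for each of the $\geq 2$ vertices of $A$. In the second case each atom contributes one such vertex. For (d) the same dichotomy with edge-atoms applies, using that a single-vertex edge-atom is precisely a vertex of degree $k$, and that minimality prevents two disjoint edge-atoms from sharing their low-degree vertex.

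The main obstacle, which I expect to absorb most of the effort, is the pair of structural lemmas underpinning (a) and (c): that the high-degree vertices induce a forest, respectively its edge-connectivity analogue. The counting above is then routine, and the constant bounds follow from the disjoint-atom dichotomy. Proving the forest property is where the minimality hypothesis must be used most delicately — one shows that a cycle running entirely through vertices of degree exceeding $k$ would permit deleting one of its edges without destroying $k$-connectivity, contradicting edge-minimality — and it is also where I anticipate the small-$k$ exceptions of part (c) to surface and demand separate handling.
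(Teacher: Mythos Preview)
The paper itself only reproves part~(d) (as Corollary~13, via Lemmas~11 and~12); parts (a)--(c) are quoted from the literature. So the comparison is mainly about~(d), and there your edge-atom sketch has a genuine gap. In a vertex-minimally $k$-edge-connected graph $G$ the graph $G$ is $k$-edge-connected, so every edge-cut of $G$ has size $\geq k$; what vertex-minimality gives you is, for each vertex $v$, a cut of size $<k$ \emph{in $G-v$}. That is not an edge-fragment of $G$, and your ``single-vertex edge-atom is a vertex of degree $k$'' and ``two disjoint edge-atoms'' statements do not attach to any object you have actually produced. The paper's fix is precisely to build the atom-like object around the deleted vertex: Lemma~12 takes an inclusion-minimal region $C$ containing a vertex $x$ with $|\partial_e^{G-x}(C-x)|=\lambda(G-x)<k$, and shows $C-x$ contains a vertex of degree $k$. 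The proof uncrosses this cut with a second cut coming from some $y\in C-x$, but the uncrossing is in $G-x$ versus $G-y$ and must track the two deleted vertices separately; Lemma~11 then handles the base case where every vertex of $C-x$ sends an edge across. Applying Lemma~12 on each side of the first cut gives the two degree-$k$ vertices.

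Your plan for (c) also has a gap: there is no edge-connectivity analogue of the ``high-degree vertices induce a forest'' lemma. The $k=3$ example the paper gives (the square of a long path with two edges added and two deleted) is edge-minimally $3$-edge-connected, yet all but six of its vertices have degree $4$, and those degree-$4$ vertices span the whole squared path, hence plenty of cycles. This is exactly why $k=3$ is exceptional in~(c); Mader's proofs for $k\neq 1,3$ use quite different counting and cannot be read off from a forest bound. Your treatment of~(a) is correct and is Mader's argument (the paper notes the forest fact just after Theorem~2), and your outline for~(b) is essentially Hamidoune's; the paper's Lemma~5 gives a closely related finite-region version with a ``moreover'' clause that locates the small-degree vertex inside a prescribed side, which is what one needs to get two such vertices.
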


In case (a), actually more than the number of vertices of small degree is known: If we delete all the vertices of degree $k$, we are left with a forest. This was shown in~\cite{maderEckenVom}, see also~\cite{BBExtGT}. For extensions of this fact to infinite graphs, see~\cite{ExtInf}.

The difference in the case $k=1$ in (a) and (c) is due to the paths. For $k=3$ there is no constant $c'_3$ in (c): to see this, take the square\footnote{The {\em square} of a graph is obtained by adding an edge between any two vertices of distance~$2$.} of any long enough path $v_1v_2v_3\ldots v_{\ell-2}v_{\ell-1}v_{\ell}$ and add the edge $v_1v_4$, and the edge $v_{\ell-3}v_\ell$. Deleting  $v_3v_4$ and $v_{\ell-3}v_{\ell-2}$ we obtain an edge-minimally $3$-edge-connected graph with only six vertices of degree $3$.

The constant $c_k$ from (a) can be chosen as $c_k=\frac{k-1}{2k-1}$, and this is best possible~\cite{maderEckenVom}. Actually one can ensure~\cite{maderEckenVom} that $|V_k|\geq \max\{ c_k|G|, k+1, \Delta(G)\}$, where $\Delta (G)$ denotes the maximum degree of $G$. 
In (c), the constant $c'_k$ may be chosen as about $1/2$ as well (for estimates, see~\cite{BBindestructive,Cai93,maderEdge}). 


\begin{figure}[ht]
      \centering
     \includegraphics[scale=0.45]{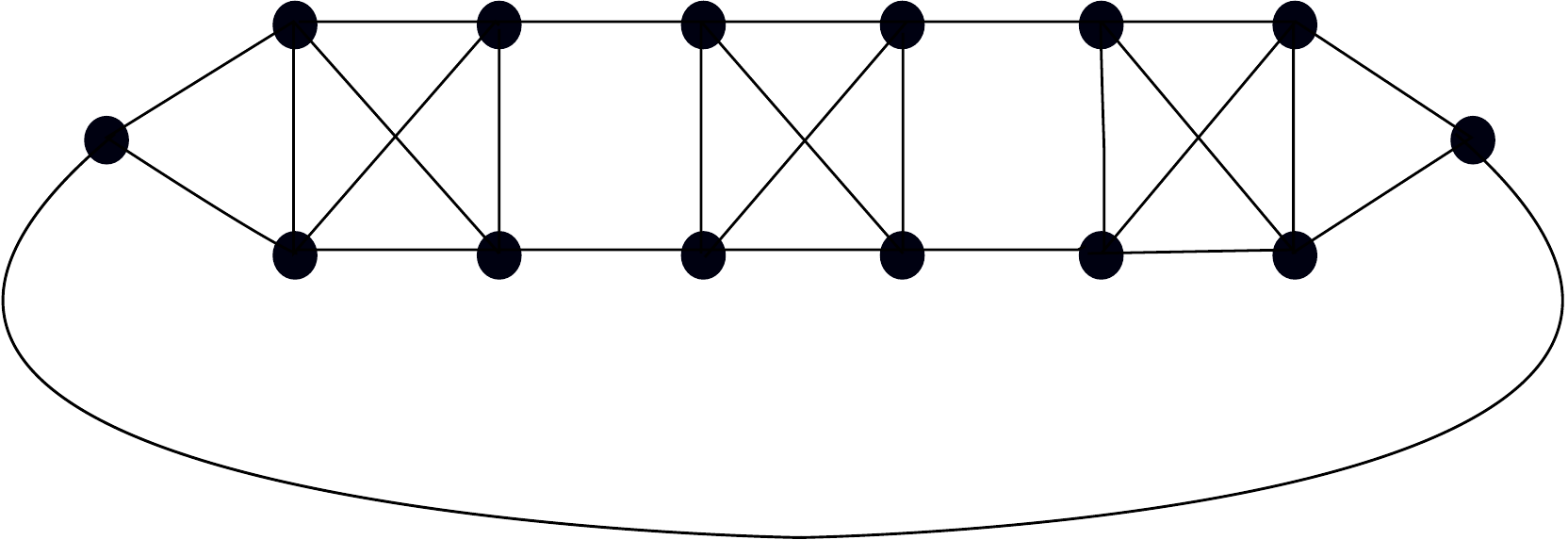}
      \caption{A finite vertex-minimally $k$-(edge)-connected graph with only two vertices of degree $<2(k-1)$, for $k=3$.\label{fig:band}}
\end{figure}

The bounds on the number of vertices of small degree are best possible in (b) and (d), for\footnote{And for $k=2$ we have $|V_2|\geq 4$ (see~\cite{maderKritischKanten} for a reference), and this is best possible, as the so-called ladder graphs show. As for $k=1$,  it is easy to see that there are no vertex-minimally $1$-(edge)-connected graphs (since we excluded $K^1$).} $k >2$. Indeed,  for $k\geq 3$ consider the following example. Take any finite number $\ell\geq 2$ of copies $H_i$ of the complete graph $K^{2(k-1)}$, and join every two consecutive $H_i$ with a matching of size $k-1$, in a way that all these matchings are disjoint. Join a new vertex $a$ to all vertices of $H_1$ that still have degree $2(k-1)-1$, and analogously join a new vertex $b$ to half of the vertices of $H_\ell$. Finally join $a$ and $b$ with an edge. See Figure~\ref{fig:band}.

The obtained graph  is vertex-minimally $k$-connected as well as vertex-mini\-mal\-ly $k$-edge-connected. However, all vertices but $a$ and $b$ have degree $2(k-1)$, which, as $k\geq 3$, is greater than $\max\{k,\frac32 k-1\}$. 

\subsection{What happens in infinite graphs?}

For infinite graphs,  a positive result for case (a) of Theorem~\ref{thm:fin} has  been obtained by Halin~\cite{halinUnMin} who showed that every infinite locally finite edge-minimally $k$-connected  graph has infinitely many vertices of degree $k$, provided that $k\geq 2$. Mader~\cite{maderUeberMin} extended the result showing that for $k\geq 2$, every infinite edge-minimally $k$-connected graph $G$ has in fact $|G|$ vertices of  degree $k$ (see Theorem~\ref{thm:inf} (a) below). It is clear that for $k=1$, we are dealing with trees, which, if infinite, need not have any vertices of degree~$1$.

\medskip

\begin{figure}[ht]
      \centering
     \includegraphics[scale=0.48]{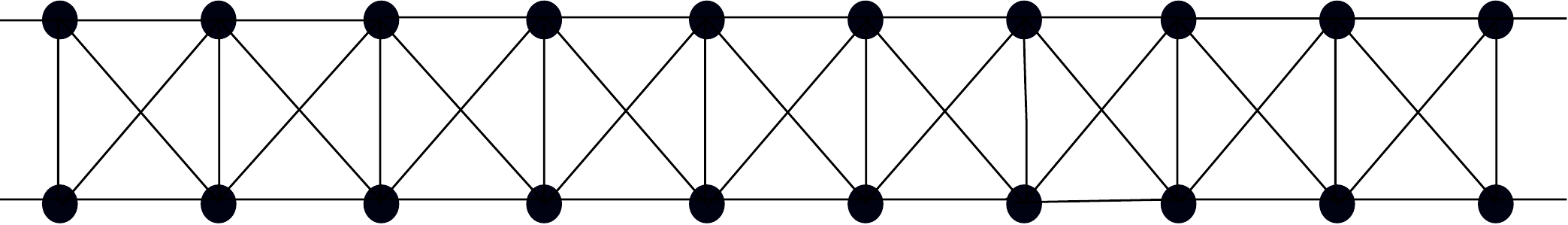}
      \caption{An infinite vertex-minimally $k$-connected graph without vertices of degree $3k/2-1$, for $k=2$.\label{fig:lickInf}}
\end{figure}

For the other three cases of Theorem~\ref{thm:fin}, the infinite version fails. In fact, for case (b) this 
 can be seen by considering the strong product of the double-ray (i.e.~the two-way infinite path) with the complete graph $K^k$ (cf.~Figure~\ref{fig:lickInf}). The obtained graph is $(3k-1)$-regular, and vertex-minimally $k$-connected. If instead of the double-ray we take the  $r$-regular infinite tree $T_r$, for any $r\in\N$, the degrees of the vertices become unbounded in $k$.
For case (d) of Theorem~\ref{thm:fin} consider the Cartesian product\footnote{The Cartesian product  of two graphs $H_1$ and $H_2$ is defined~\cite{diestelBook05,nesetrilHomo}  as the graph on $V(H_1)\times V(H_2)$ which has an edge $(u_1,u_2)(v_1,v_2)$ if for $i=1$ or $i=2$ we have that $u_{i}v_{i}\in E(H_{i})$  and  $u_{3-i}=v_{3-i}$.} of  $K^k$ with $T_r$  (see Figure~\ref{fig:lickInf2}). 
%

\medskip

 \begin{figure}[ht]
       \centering
     \includegraphics[scale=0.5]{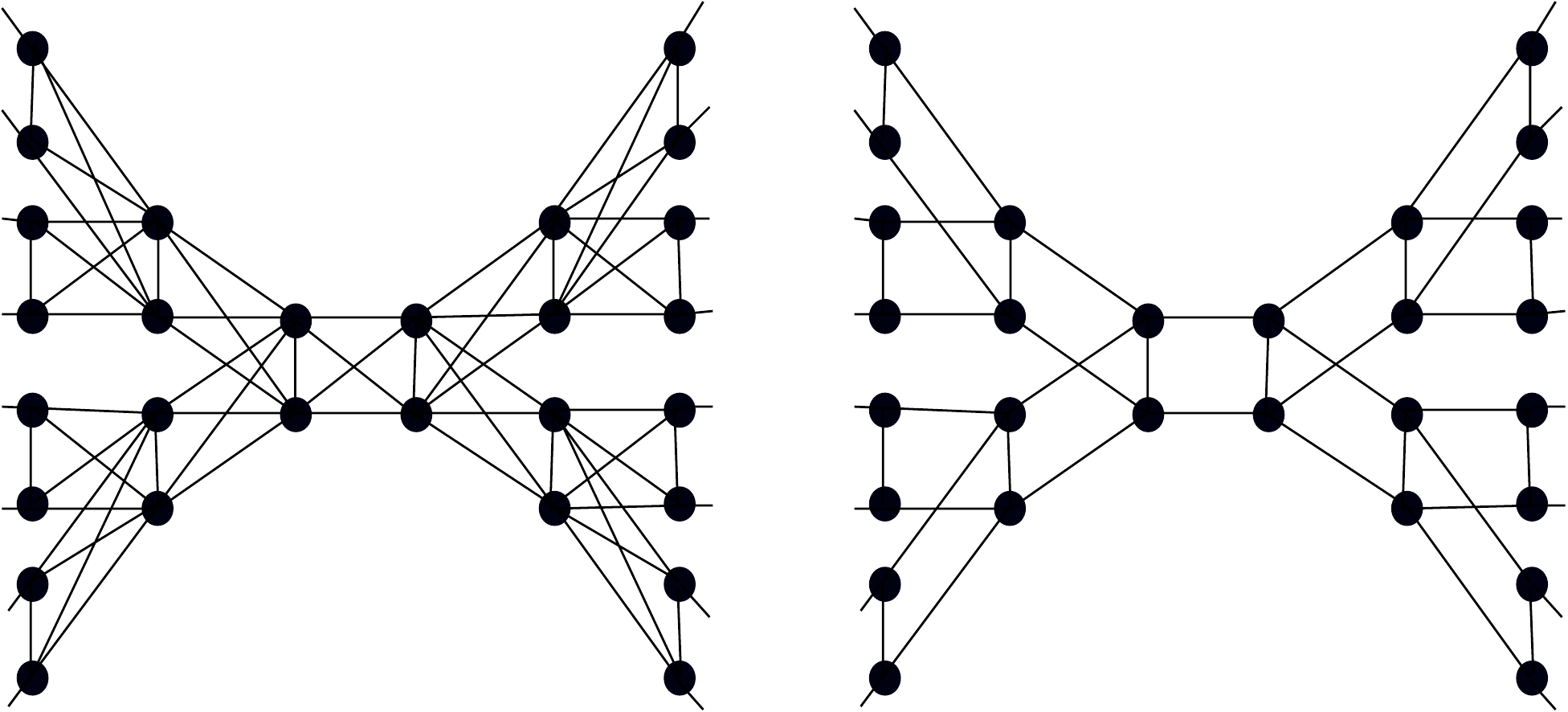}
       \caption{The strong and the Cartesian product of $T_3$ with $K^2$.\label{fig:lickInf2}}
 \end{figure}

Counterexamples for an infinite version of (c) will be given now.
For the values $1$ and $3$ this is particularly easy, as for $k=1$ we may consider the double ray $D$, and for $k=3$ its square $D^2$. All the vertices of these graphs have degree $2$ resp.~$4$, but $D$ and $D^2$ are edge-minimally $1$- resp.~$3$-edge-connected. 

For arbitrary values $k\in\N$, we construct a counterexample as follows. Choose $r\in\N$ and take the $rk$-regular tree $T_{rk}$. For each vertex $v$ in $T_{rk}$, insert edges between the neigbourhood $N_v$ of $v$ in the next level so that $N_v$ spans $r$ disjoint copies of $K^k$ (Figure~\ref{fig:plop} illustrates the case $k=4$, $r=2$). This procedure gives an edge-minimally $k$-edge-connected graph, as one easily verifies. However, the vertices of this graph all have degree at least $rk$.

\bigskip

\begin{figure}[ht]
\hspace{.1cm}
      \includegraphics[scale=.65]{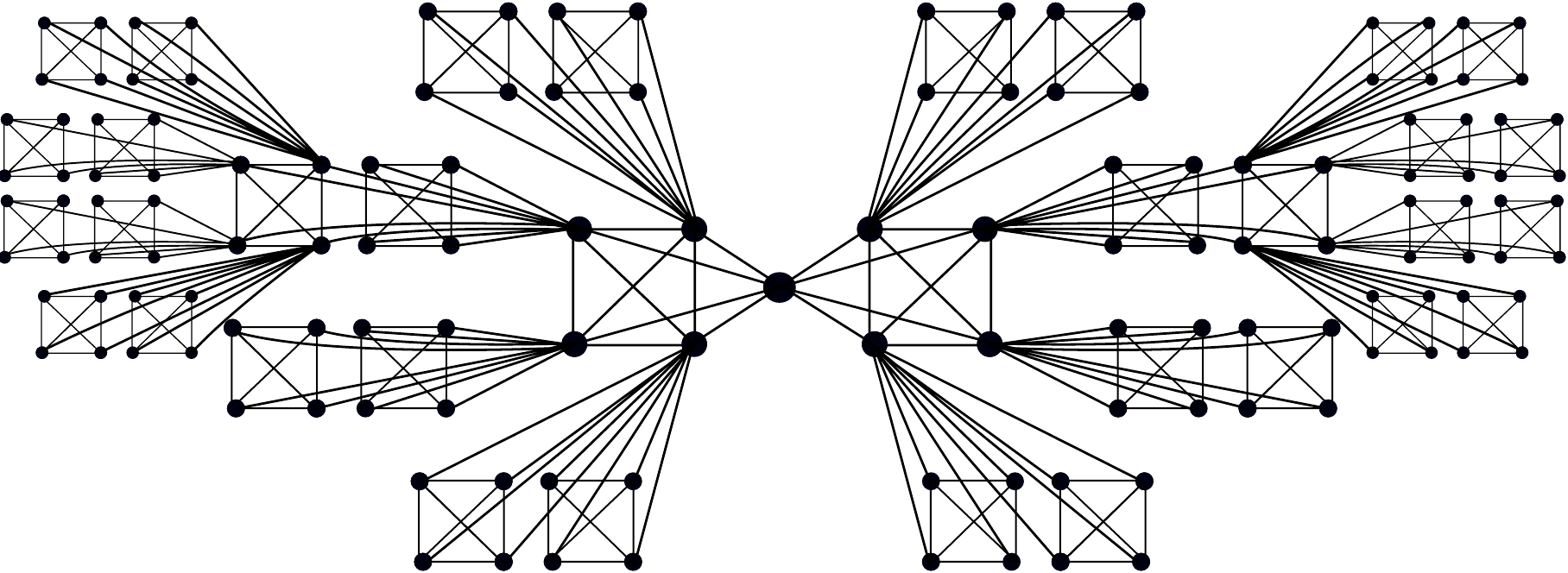}
      \caption{An edge-minimally $4$-edge-connected graph without vertices of degree~$4$.\label{fig:plop}}
\end{figure}

Hence a literal extension of Theorems~\ref{thm:fin} and~\ref{thm:finQuant} to infinite graphs is not true, except for part (a). The reason can be seen most clearly comparing Figures~\ref{fig:band} and~\ref{fig:lickInf}: Where in a finite graph we may force vertices of small degree just because the graph has to end somewhere, in an infinite graph we can  `escape to infinity'. So an adequate extension of Theorem~1 should also measure  something like `the degree at infinity'. 

This rather vague-sounding statement can be made  precise. In fact, the points `at infinity'
are nothing  but the ends of graphs, a concept which has been introduced by Freudenthal~\cite{freudenthal} and later independently by Halin~\cite{halin64}, and which is a mainstay of contemporary infinite graph theory. Ends are defined as equivalence classes of rays (one-way infinite paths), where two rays are equivalent if no finite set of vertices separates them. That this is in fact an equivalence relation is easy to check. The set of all ends of a graph $G$ is denoted by $\Omega (G)$. For more on ends consult the infinite chapter of~\cite{diestelBook05}, see also~\cite{endsBerniElmar}.

The concept of the end degree has been introduced in~\cite{degree} and~\cite{hcs},  see also~\cite{diestelBook05}. In fact we distiguish between two types of end degrees: the {\em vertex-degree} and the {\em edge-degree}. The vertex-degree $d_v(\omega)$ of an end $\omega$ is defined as the supremum of the cardinalities of the sets of (vertex)-disjoint rays in $\omega$, and the edge-degree $d_e(\omega)$ of an end $\omega$ is defined as the supremum of the cardinalities of the sets of edge-disjoint rays in $\omega$. These suprema are indeed maxima~\cite{degree,halin65}. Note that $d_e(\omega)\geq d_v(\omega)$.

In light of this definition, we observe at once what happens in the case $k=1$ of the infinite version of Theorem~1 (a) above.  Edge-minimally $1$-connected graphs,  otherwise known as infinite trees, need not have vertices that are leaves, but if not, then they must have `leaf-like' ends, that is, ends of vertex-degree~$1$. In fact, it is easy to see that in a tree $T$, with root $r$, say, every ray starting at $r$ corresponds to an end of $T$, and that all ends of $T$ have vertex- and edge-degree~$1$. On the other hand, rayless trees have leaves.

This observation  gives case (a') in the following generalisation of Theorem~\ref{thm:fin} to infinite graphs. Cases (b)--(d) of Theorem~\ref{thm:inf}, respectively their quantative versions in Theorem~4, are the main result of this paper.

\begin{theorem}\label{thm:inf}
 Let $G$ be a  graph, let $k\in\N$. 
\begin{enumerate}[(a)]
 \item {\bf (Mader~\cite{maderUeberMin})}  If $G$ is edge-minimally $k$-connected and  $k\geq 2$, then $G$ has a vertex of degree $k$.
 \end{enumerate}
 \begin{enumerate}[(a')]
 \item   If $G$ is edge-minimally $1$-connected, then $G$ has a vertex of degree $1$ or an end of edge-degree $1$.
 \end{enumerate}
\begin{enumerate}[(a)]\setcounter{enumi}{1}
 \item If $G$ is vertex-minimally $k$-connected, then $G$ has a vertex of degree  $\leq \frac 32k-1$ or an end of vertex-degree $\leq k$.
 \item If $G$ is edge-minimally $k$-edge-connected, then $G$ has a vertex of degree $k$ or an end of edge-degree $\leq k$.
\item If $G$ is vertex-minimally $k$-edge-connected, then $G$ has a vertex of degree $ k$ or an end of vertex-degree $\leq k$.
\end{enumerate}
\end{theorem}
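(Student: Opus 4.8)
The statement gathers five parts, of which (a) is quoted from Mader~\cite{maderUeberMin} and (a$'$) is exactly the observation about trees made just above; so the plan concerns the genuinely new cases (b), (c) and (d). In each of these the conclusion is a dichotomy --- a vertex of small degree \emph{or} an end of small (vertex- or edge-) degree --- and the plan is to obtain it from a single dichotomy at the level of \emph{fragments}. The guiding principle is the one highlighted in the introduction: in the finite case a smallest fragment (an \emph{atom}) is forced because the graph must end somewhere, and the atom then carries a vertex of small degree; in the infinite case the fragments may instead shrink forever, and then they converge to an end whose degree is controlled by the size of their boundaries. I would run all three cases on this common skeleton, specialising the notion of fragment and the boundary measure to each: for (b) a fragment is a side of a minimum \emph{vertex}-separator with vertex-boundary, for (c) a side of a minimum \emph{edge}-cut with edge-boundary, and for (d) the splitting of $G$ along a small edge-cut exposed by deleting a vertex.

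First I would record the consequence of minimality. If $G$ is vertex-minimally $k$-connected then, since $G$ is $k$-connected but $G-v$ is not, adjoining $v$ to a separator of $G-v$ of size $\le k-1$ yields a separator of $G$ of size at most $k$ containing $v$, hence exactly $k$ by $k$-connectivity; so \emph{every} vertex lies in a minimum separator, and each such separator is the vertex-boundary of two fragments. The analogous statements for (c) and (d) say that every edge lies in a minimum edge-cut, respectively that deleting any vertex exposes an edge-cut of size $\le k-1$, again furnishing fragments with boundary of size $\le k$. Under the assumption (towards a contradiction) that $G$ has \emph{no} vertex of the prescribed small degree, a one-vertex fragment is impossible --- such a vertex would have degree $k$ in (c),(d) or degree $\le\tfrac32 k-1$ in (b) --- so all fragments have at least two vertices.

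The second ingredient is the lemma converting shrinking fragments into a cheap end. Suppose $X_1 \supsetneq X_2 \supsetneq \cdots$ are infinite connected fragments, each with (vertex- or edge-) boundary of size $\le k$ and with $\bigcap_i X_i=\emptyset$. Since the boundaries are finite, some end $\omega$ has a tail in every $X_i$, and I claim $d_v(\omega)\le k$ (resp.\ $d_e(\omega)\le k$): given $m$ disjoint (resp.\ edge-disjoint) rays of $\omega$, for $i$ large the starting vertex of each ray has already left $X_i$ while cofinitely much of it lies inside $X_i$, so each of the $m$ rays meets the boundary of $X_i$; disjointness forces the $m$ rays to use $m$ distinct boundary vertices (resp.\ edges), whence $m\le k$. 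This is the step that produces the end-degree bound, and it explains why the end threshold is the clean value $k$ rather than $\tfrac32 k-1$.

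It remains to build such a sequence, and this is where I expect the real work. Starting from any fragment with an infinite side $X_1$, I would pick an internal edge (available since $|X_1|\ge 2$) and intersect $X_1$ with a minimum cut or separator running through that edge; submodularity of the boundary function together with the connectivity of $G$ forces both parts of the intersection to be fragments of boundary $k$ again, at least one of them infinite and strictly inside $X_1$. Iterating, one tries to expel a prescribed vertex at each step so as to arrange $\bigcap_i X_i=\emptyset$ and reach the end of the previous paragraph. The genuine obstacle is the \emph{alternative}: the process may get stuck, every refinement leaving the same infinite core. I would argue that a stuck configuration is an inclusion-minimal infinite fragment in which every internal minimum cut already has a finite side, so that $G$ restricted to it behaves like a finite edge-/vertex-minimally $k$-(edge-)connected graph; applying Theorem~\ref{thm:fin} inside it then yields a vertex of the required small degree, provided one checks that the witnessing vertex is interior, so that its degree in $G$ equals the one computed in the fragment. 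The subtlest case to make precise this way is (d), where deleting a vertex exposes an \emph{edge}-cut yet the conclusion concerns the \emph{vertex}-degree of an end, so the two kinds of boundary must be matched carefully; this reconciliation, and the guarantee that the descent either escapes to infinity or halts at an analysable minimal fragment, are the two points I expect to consume most of the effort.
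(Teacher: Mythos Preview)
Your plan is essentially the paper's own: your ``second ingredient'' is exactly Lemma~\ref{lem:regions}, and the dichotomy \emph{shrink to an end / halt at a minimal fragment} is precisely how the paper organises all three cases (via Lemma~\ref{super} for (b), Lemma~\ref{lem:inkl2} for (c), and Lemma~\ref{lem:noC} for (d)). The one point to sharpen is the ``stuck'' branch: you cannot literally apply Theorem~\ref{thm:fin} to the minimal fragment, since that fragment is not itself a minimally $k$-(edge-)connected graph---the paper instead proves region-relative versions of the finite arguments (Lemma~\ref{lem:finReg} for (b), the submodularity count in the proof of Theorem~\ref{thm:infQuant}(c), and Lemmas~\ref{lem:findthevertex}--\ref{lem:vefin} for (d)) that work inside $G$ and locate the small-degree vertex in the interior of the fragment, exactly the check you flagged as necessary.
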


As in the finite case, one can give bounds on the number of vertices/ends of small degree. Recall that $V_n=V_n(G)$ denotes the set of all vertices of degree at most $n$, and let $\Omega^v_n=\Omega^v_n(G)$ resp.~$\Omega^e_n=\Omega^e_n(G)$ denote the set of ends of vertex- resp.~edge-degree at most $n$.

\begin{theorem}\label{thm:infQuant}
 Let $G$ be a graph, let $k\in\N$. Then
\begin{enumerate}[(a)]
  \item {\bf (Mader~\cite{maderUeberMin})} In case (a) of Theorem~\ref{thm:inf},  $|V_k|=|G|$,
   \end{enumerate}
 \begin{enumerate}[(a')]
 \item   In case (a') of Theorem~\ref{thm:inf},  $|V_1\cup\Omega^e_1|=|G|$ unless $|G|\leq \aleph_0$, in which case $|V_1\cup\Omega^e_1|\geq 2$,
 \end{enumerate}
\begin{enumerate}[(a)]\setcounter{enumi}{1}
 \item  In case (b) of Theorem~\ref{thm:inf}, $|V_{3k/2-1}\cup\Omega^v_k|\geq 2$,
   \item  In case (c) of Theorem~\ref{thm:inf}, $|V_k\cup\Omega^e_k|\geq 2$,
 \item   In case (d) of Theorem~\ref{thm:inf}, $|V_k\cup\Omega^v_k|\geq 2$.
\end{enumerate}
\end{theorem}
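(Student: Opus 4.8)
The plan is to quote part (a) from Mader~\cite{maderUeberMin}, to treat (a') directly, and to reduce the three genuinely new parts (b), (c), (d) to the existence statement of Theorem~\ref{thm:inf} by a reduction-along-a-small-separation argument.

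For (a') the graph is a tree $T$, the set $V_1$ is its set of leaves, and every end lies in $\Omega^e_1$, so $V_1\cup\Omega^e_1$ is exactly the set of all leaves and ends of $T$. If $T$ has an edge $uv$, then extending a maximal path from $u$ away from $v$, and one from $v$ away from $u$, each either terminates in a leaf or runs into a ray; the two terminal objects are separated by $uv$ and hence distinct, giving $|V_1\cup\Omega^e_1|\geq 2$ and settling the countable case. For the uncountable case I root $T$ and assign to each vertex $w$ some leaf or end of the subtree above $w$. Every fibre of this assignment lies on a single root--leaf path or on a single ray, hence is countable, so $|V(T)|\leq\aleph_0\cdot|V_1\cup\Omega^e_1|$; as $T$ is uncountable this forces $|V_1\cup\Omega^e_1|\geq|V(T)|=|G|$, as required.

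For (b), (c) and (d) I argue by contradiction: suppose $G$ has a single small object $x$, where \emph{small} means degree $\leq\frac32k-1$ (case (b)) or $\leq k$ (cases (c),(d)) for a vertex, and vertex- resp.\ edge-degree $\leq k$ for an end, as appropriate. Theorem~\ref{thm:inf} guarantees that $x$ exists. Minimality now supplies a small separation avoiding $x$: deleting the relevant critical vertex or edge shows that some separation $(A,B)$ of $G$ of order at most $k$, or some edge cut of size at most $k$, can be chosen with $x$ lying on the $A$-side. I then build a \emph{reduced graph} $G_B$ by deleting the interior of $A$ and attaching, along the separator, a suitable finite gadget that restores the lost degrees and cuts, designed so that $G_B$ is again minimally $k$-(edge-)connected of the same type as $G$. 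Feeding $G_B$ into Theorem~\ref{thm:inf} --- or, when $G_B$ is finite, into Theorems~\ref{thm:fin} and~\ref{thm:finQuant} --- yields a small object $y$ of $G_B$. Since $x$ and the interior of $A$ have been removed and the gadget carries no small object of its own, $y$ lifts to a genuine small object of $G$ different from $x$, the desired contradiction.

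The technical heart, and the step I expect to be hardest, is the design of the gadget. It must simultaneously (i) keep the connectivity of $G_B$ exactly $k$ and keep every remaining vertex and edge critical, so that $G_B$ really falls under Theorem~\ref{thm:inf}; (ii) raise the degrees of the separator vertices, and of its own vertices, above the relevant small threshold, so that neither a gadget vertex nor a separator vertex can masquerade as the object $y$; and (iii) be finite, so that it creates no new ends, while every end of $G_B$ still lifts to an end of $G$ of no larger degree. The naive choice of contracting all of $A$ to a single apex fails condition (ii), since that apex would inherit degree equal to the cut size and hence be small; the remedy is to replace $A$ by a padded $k$-connected block meeting the separator. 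Carrying this out uniformly across the three connectivity regimes, and checking that an end $x$ really can be cut off from the $B$-side by the finite separator supplied by minimality, is where the main work lies.
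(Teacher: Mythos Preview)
Your treatment of (a') is correct and, with the fibre--counting for the uncountable case, arguably cleaner than the paper's own one-line sketch.

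For (b), (c) and (d), however, your plan is genuinely different from the paper's, and it has a real gap.

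The paper never modifies $G$. It fixes one small separator or cut of $G$ and then works on each side \emph{inside} $G$: on a given side it either locates an inclusion-minimal region with small boundary and extracts a small-degree vertex from it by a local submodularity or counting argument (Lemma~\ref{lem:finReg} for (b), a direct cut count for (c), Lemmas~\ref{lem:findthevertex} and~\ref{lem:vefin} for (d)), or else it builds an infinite properly nested chain $D_0\supsetneq D_1\supsetneq\cdots$ of regions with boundaries of size at most $k$ and reads off an end of small degree via Lemma~\ref{lem:regions}. Theorem~\ref{thm:inf} is obtained along the way as a by-product, never used as an input.

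Your reduction to Theorem~\ref{thm:inf} via a gadget breaks down at the step you yourself flag, and the obstacle is structural rather than merely technical. First, preserving minimality on the $B$-side is not automatic: a vertex $v\in V(B)$ (cases (b) and (d)) or an edge $e\in E(B)$ (case (c)) may be critical in $G$ only through a $k$-separator or $k$-cut that meets the interior of $A$; once $A$ is replaced by a fixed finite block there is no reason for $v$ or $e$ to remain critical in $G_B$, and this has to hold for \emph{every} such $v$ or $e$ simultaneously. Second, for case~(b) your requirements (i) and (ii) are in direct conflict. If $G_B$ really is vertex-minimally $k$-connected and the gadget is a non-trivial finite block attached along the original $k$-separator $S$, then the gadget together with $S$ is a profound finite $k$-region of $G_B$, and Lemma~\ref{lem:finReg} (with its ``moreover'' clause, the $B$-side being large) forces a vertex of degree at most $\tfrac32 k-1$ inside the gadget itself. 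So the small object produced by Theorem~\ref{thm:inf} in $G_B$ may well sit in the gadget, and you have no device for pushing it into $B$.

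Finally, note a mild circularity: in this paper Theorem~\ref{thm:inf}(b)--(d) are obtained only as corollaries of Theorem~\ref{thm:infQuant}, so treating Theorem~\ref{thm:inf} as a black box would require an independent proof --- which in practice needs the same minimal-region machinery you are trying to bypass.
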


Concerning part (c) we remark that we may replace graphs with multigraphs (see Corollary~\ref{cor:multiedgeedge2}). Also, in (a') and (c), one may replace the edge-degree with the vertex-degree, as this yields a weaker statement.

We shall prove Theorem~\ref{thm:infQuant} (b), (c) and (d) in Sections~\ref{sec:vv}, \ref{sec:ee} and~\ref{sec:ve} respectively. 
Statement (a') is fairly simple, in fact, it follows from our remark above that every tree has at least two leaves/ends of vertex-degree $1$. In general, this is already the best bound, because of the (finite or infinite) paths. For trees $T$ of uncountable order we get more, as these have to contain a vertex of degree $|G|$, and it is then easy to find  $|G|$ vertices/ends of (edge)-degree $1$.


In analogy to the finite case, the bounds on the degrees of the vertices in (b)  cannot be lowered, even if we allow the ends to have  larger vertex-degree. An example for this is given at the end of Section~\ref{sec:VDel}. There, we also state a lemma that says that the vertex-/edge-degree of the ends in Theorem~\ref{thm:infQuant} will in general not be less than $k$.

Also, the bound on the number of vertices/ends of small degree in  Theorem~\ref{thm:infQuant} (b) and (d) is best possible. For (d), this can be seen by considering  the Cartesian product  of the double ray with the complete graph $K^k$ (for $k=2$ that is the double-ladder).  For (b), we may  again consider the strong product of the double ray with the complete graph $K^k$ (see Figure~\ref{fig:lickInf} for $k=2$). The latter example also shows that in (b), we cannot replace the vertex-degree with the edge-degree.

As for Theorem~\ref{thm:infQuant} (c), it might be possible that the bound of  Theorem~\ref{thm:finQuant} (c) extends. 
For infinite graphs $G$, the positive proportion of the vertices there should translate to an infinite set $S$ of vertices and ends of small degree/edge-degree. More precisely, one would wish for a set $S$ of cardinality $|V(G)|$, or even stronger,  $|S|=|V(G)\cup\Omega(G)|$. 
Observe that it is necessary to exclude also in the infinite case the two exceptional values $k=1$ and $k=3$, as  there are graphs (e.g.~$D$ and $D^2$) with only two vertices/ends of (edge)-degree~$1$ resp.~$3$. 

\begin{question}\label{q:edge}
For $k\neq 1,3$, does  every infinite edge-minimally $k$-edge-connected graph $G$ contain infinitely many vertices or  ends of (edge)-degree $k$? Does $G$ have $|V(G)|$, or even $|V(G)\cup\Omega(G)|$, such vertices or ends? 
\end{question}

Another interesting question is which $k$-(edge)-connected graphs have vertex- or edge-minimally $k$-(edge)-connected subgraphs. Especially interesting in the case of edge-connectivity would be an edge-minimally $k$-edge-connected subgraph on the same vertex set as the original graph. Finite graphs trivially do have such subgraphs, but for infinite graphs this is not always true. One example in this respect is the double-ladder, which is $2$-connected but has no edge-minimally $2$-connected subgraphs on the same vertex set. This observation leads to the study of vertex-/edge-minimally $k$-(edge)-connected {\em (standard) subspaces} rather than graphs. For more on this, see~\cite{diestelBanffsurvey,ExtInf}, the latter of which contains a version of Theorem 3~(a) for standard subspaces.

\medskip

We finish the introduction with a few necessary definitions. The {\em vertex-boundary} $\partial _vH$ of a subgraph $H$ of a graph $G$ is the set of all vertices of $H$ that have neighbours in $G-H$. The {\em edge-boundary} of $H$ is the set $\partial_e H=E(H,G-H)$. A {\em region} of a graph is a connected induced subgraph $H$ with finite vertex-boundary $\partial_v H$.
If $\partial_v H=k$, then we call $H$ a {\em $k$-region} of $G$. A {\em profound region} is a region $H$ with $H-\partial_v H\neq\emptyset$.

\section{Vertex-minimally $k$-connected graphs}\label{sec:VDel}\label{sec:vv}

In this section we shall show part (b) of Theorem~\ref{thm:infQuant}.
For the proof, we need two lemmas. The first of these lemmas  may be extracted\footnote{Although the graphs there are all finite, the procedure is the same.} from~\cite{lick} or from~\cite{maderUeberMin}, and at once implies  Theorem~\ref{thm:fin} (b). For completeness, we shall give a  proof.

\begin{lemma}\label{lem:finReg}
Let $k\in\N$,  let $G$ be a vertex-minimally $k$-connected graph, and let $H$ be a profound finite  $k$-region of $G$. Then $G$ has a vertex $v$ of degree at most $\frac 32 k-1$. \\
Moreover, if $|G-H|> |H-\partial_v H|$, then we may choose $v\in V(H)$.
\end{lemma}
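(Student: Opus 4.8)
The plan is to localise Mader's atom argument for the finite bound $\tfrac32 k-1$ to the region $H$. Write $S=\partial_v H$, $A=V(H)\setminus S$ and $B=V(G)\setminus V(H)$. Since $H$ is profound, $A\neq\emptyset$; since $|S|=k\ge 1$ and a profound $k$-region needs an interior, $k$ boundary, and an exterior vertex, we have $|V(G)|\ge k+2$ and in particular $B\neq\emptyset$. As every vertex of $A$ is interior to $H$, it has no neighbour in $B$, so $N(A)\subseteq S$; and as $N(A)$ separates the nonempty sets $A$ and $B$ in the $k$-connected graph $G$, necessarily $|N(A)|=k$, i.e.\ $A$ is a \emph{finite fragment} with $N(A)=S$. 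Vertex-minimality forces $\kappa(G)=k$ and gives, for every $v\in V(G)$, a minimum separator $T\ni v$ of size $k$: deleting $v$ leaves a graph of connectivity exactly $k-1$, and $v$ together with a $(k-1)$-separator of $G-v$ is the desired $k$-separator of $G$.

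Next I pass to a minimal fragment inside $A$. Among all nonempty finite sets $F\subseteq A$ with $|N(F)|=k$ and $V(G)\setminus(F\cup N(F))\neq\emptyset$ --- a family that is nonempty since it contains $A$ --- choose one, $A_0$, of minimum cardinality. Crucially, every vertex of $A_0\subseteq A$ is interior to $H$, so all of $N(A_0)$ lies in $H$; thus $A_0\cup N(A_0)\subseteq V(H)$, and in particular $A_0\subseteq V(H)$. Since each $v\in A_0$ has its neighbours in $A_0\cup N(A_0)$, we get $d(v)\le(|A_0|-1)+k$, so it suffices to prove $|A_0|\le k/2$; this yields a vertex $v\in A_0\subseteq V(H)$ of degree at most $\tfrac32 k-1$.

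To bound $|A_0|$ I use submodularity of the neighbourhood function, $|N(X\cap Y)|+|N(X\cup Y)|\le|N(X)|+|N(Y)|$ for all $X,Y\subseteq V(G)$. Suppose $|A_0|>k/2$, fix $v\in A_0$ and a minimum separator $T\ni v$, and let $F$ and $\bar F=V(G)\setminus(F\cup T)$ be the two sides of $T$, with $N(F)=N(\bar F)=T$. Both $A_0\cap F$ and $A_0\cap\bar F$ are proper subsets of $A_0$ (they omit $v\in T$) and lie in $A$. If, say, $A_0\cap F$ is a genuine fragment, then by minimality of $A_0$ it cannot be a $k$-fragment, so $|N(A_0\cap F)|\ge k+1$; but submodularity applied to $A_0$ and $F$ gives $|N(A_0\cap F)|\le|N(A_0)|+|N(F)|-|N(A_0\cup F)|\le 2k-k=k$, a contradiction. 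Hence no side of $T$ can properly cut $A_0$, and $A_0$ must sit on one side of $T$. Since $v$ was arbitrary, this is exactly the configuration in Mader's proof that atoms of a critically $k$-connected graph have at most $k/2$ vertices, and the same count gives $|A_0|\le k/2$.

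The main obstacle is the degenerate case of the crossing argument, where a purported smaller fragment has empty complement, so $N(A_0\cup F)$ (or $N(A_0\cap F)$) is not a separator and the estimate $|N(A_0\cup F)|\ge k$ fails. Such a case means that one side of $T$ is swallowed into $A_0\cup T$, and this can only occur when $G$ is finite. It is precisely here that the hypothesis enters: $|G-H|>|H-\partial_v H|$ reads $|B|>|A|\ge|A_0|$, so the bulk of $B$ cannot lie on a side of $T$ that is contained in $A_0\cup T$ (a set meeting $B$ only in the $k$-element set $T$). Orienting the choice of $F$ and $\bar F$ so that the swallowed side is the one disjoint from the bulk of $B$, the argument goes through and keeps the minimal fragment, and hence the low-degree vertex, inside $H$. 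When $G$ is infinite the hypothesis holds automatically ($B$ is infinite, $A$ finite) and the degeneracy never arises. For the unconditional first statement one simply drops the requirement $F\subseteq A$ and minimises over all finite fragments of $G$ of size at most $|A|$ (again nonempty because of $A$); the same submodular argument bounds this global atom by $k/2$ and produces a vertex of degree at most $\tfrac32 k-1$ somewhere in $G$, though no longer necessarily in $H$.
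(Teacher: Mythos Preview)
Your overall strategy---pass to a minimal fragment $A_0\subseteq A$, cross it with a $k$-separator $T\ni v$, and use submodularity---is the same crossing-cuts argument the paper uses (there with a minimal profound $k$-region in place of your minimal fragment). But the execution has a real gap at the decisive step. From submodularity you correctly obtain, in the non-degenerate case, that $A_0\cap F=\emptyset=A_0\cap\bar F$; the conclusion of this is $A_0\subseteq T$, not that ``$A_0$ must sit on one side of $T$'' (indeed $v\in A_0\cap T$ rules that out). This only gives $|A_0|\le k$, whereas you need $|A_0|\le k/2$. You then simply invoke ``the same count'' from Mader without supplying it. That count is a separate argument: once $A_0\subseteq T$, one bounds $|N(\bar A_0\cap F)|+|N(\bar A_0\cap\bar F)|\le 2(k-|A_0|)+|S_0\setminus T|\le 3k-2|A_0|$, while $k$-connectivity forces each summand to be at least $k$; combining yields $|A_0|\le k/2$. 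This is exactly the inequality $2|X|+k+|T^*|\le 2k$ in the paper's proof, and without it your degree bound does not follow.

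The treatment of the degenerate case and of the ``Moreover'' clause is also not a proof. Saying that under $|B|>|A|$ one can ``orient the choice of $F$ and $\bar F$'' so that ``the argument goes through'' does not explain which inequality survives or why the low-degree vertex then lies in $H$. In the paper's version the situation is handled explicitly: if the small piece $X$ turns out to be $C_2=G-H$ rather than a piece of $H$, the counting gives $|C_2|\le (k-|T^*|)/2$ and hence $|C_2|\le|C_1|$, contradicting $|G-H|>|H-\partial_vH|$. You need an analogous concrete comparison, not an appeal to orientation. As written, the proposal outlines the right method but leaves the two key quantitative steps unproved.
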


\begin{proof}
Note that we may assume  $H$ is inclusion-minimal with the above properties.
Set $T:=\partial_v H$, set $C_1:=H-T$, and set $C_2:=G-H$. Let $x\in V(C_1)$, and observe that since $G$ is vertex-minimally $k$-connected, there is a $k$-separator $T'$ of $G$ with $x\in T'$. Let $D_1$  be a component of $G-T'$, set $D_2:=G-T'-D_1$, and set $T^*:=T\cap T'$.
Furthermore, for $i,j=1,2$ set $A^i_j:=C_i\cap D_j$ and set $T^i_j:=(T'\cap C_i)\cup(T\cap D_j)\cup T^*$. Observe that $N(A^i_j)\subseteq T^i_j$. 

We claim that there are $i_1,i_2,j_1,j_2$ with either $(i_1,j_1)=(i_2,3-j_2)$ or $(i_1,j_1)=(3-i_2,j_2)$ such that for $(i,j)\in\{(i_1,j_1),(i_2,j_2)\}$:
\begin{equation}\label{eq:Tcuts}
 |T^i_j|\leq k\text{ and }A^i_j=\emptyset.
\end{equation}

In fact, observe that for $j=1,2$ we have that $|T^1_j|+|T^2_{3-j}|= |T|+|T'| =  2k.$ Thus either
 $|T^1_j|\leq k$, which by the minimality of $H$ implies that $A^1_j$ is empty, or  $|T^2_{3-j}|<k$, which by the $k$-connectivity of $G$ implies that $A^2_{3-j}$ is empty. This proves~\eqref{eq:Tcuts}.

 We hence know that  there is an $X\in\{C_1,C_2,D_1,D_2\}$ such that $V(X)\subseteq T\cup T'$. Now, as $|T|=|T'|=k$, ~\eqref{eq:Tcuts} implies that 
\begin{equation*}
 2|X|+k+|T^*|\leq |T^{i_1}_{j_1}|+|T^{i_2}_{j_2}|\leq 2k,
\end{equation*}
and hence,
\begin{equation}\label{eq:2}
 |X|+\frac{|T^*|}{2}\leq \frac k2.
\end{equation}
Thus, there is  a vertex $v\in X$ of degree at most $$\max\{ |T|+|X|-1,|T'|+|X|-1\}\leq k+k/2-1.$$ 

Clearly we may assume $v\in V(H)$ unless both $|T^1_1|$ and $|T^1_2|$ are strictly greater than $k$. But then by~\eqref{eq:Tcuts},  $V(C_2)\subseteq T'$, and thus by~\eqref{eq:2}, $|C_2|\leq k/2-|T^*|/2$. So, $$|C_2|\leq k-|T^*|-|C_2|= |T'|-|T^*|-|C_2|\leq |T'\cap C_1|\leq |C_1|,$$ as desired.
\end{proof}

We also  need a lemma from~\cite{hcs}. 

\begin{lemma}$\!\!${\bf\cite[{\rm Lemma 5.2}]{hcs} }
\label{super}
 Let $G$ be a graph such that all its ends have vertex-degree at least
$m\in\N$. Let $C$ be an infinite region of $G$. Then there exists a profound region
$C'\subseteq C$ for which one of the following holds:
\begin{enumerate}[(a)]
\item $C'$ is finite and $|\partial_v C' | < m$ , or
\item $C'$ is infinite and $|\partial_vC'' | \geq m$ for every profound region $C''\subsetneq C'$.
\end{enumerate}
\end{lemma}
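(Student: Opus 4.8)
The plan is to argue by contradiction: suppose that no profound region $C'\subseteq C$ satisfies (a) or (b). Unravelling this gives two structural facts. First, since (a) fails for every candidate, \emph{no} finite profound region $D\subseteq C$ has $|\partial_v D|<m$; equivalently, every profound region of boundary $<m$ inside $C$ is infinite. Second, since (b) fails, no infinite profound region $D\subseteq C$ is ``irreducible'': each such $D$ contains a profound region $D'\subsetneq D$ with $|\partial_v D'|<m$, and by the first fact this $D'$ is again infinite. Starting from $D_0:=C$ (an infinite profound region, since $\partial_v C$ is finite while $C$ is infinite) and iterating the second fact, I obtain a strictly descending chain
\[
 C=D_0\supsetneq D_1\supsetneq D_2\supsetneq\cdots
\]
of infinite profound regions with $|\partial_v D_i|\le m-1$ for every $i\ge 1$. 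The goal is then to squeeze an end of small vertex-degree out of this chain, contradicting the hypothesis.

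The first main step is to produce a single ray $R$ that eventually lies inside \emph{every} $D_i$, so that $R$ determines an end $\omega$ living in all the $D_i$. To do so I pick an interior vertex $v_i\in D_i-\partial_v D_i$ for each $i$ (possible as $D_i$ is profound), join $v_i$ to $v_{i+1}$ by a path $P_i$ inside the connected region $D_i$, and set $H:=\bigcup_i P_i$. Since each $P_i$ lies in $D_i$ and the regions are nested, the walk $P_0P_1\cdots$ has a tail in every $D_i$; more usefully, $H\setminus D_i\subseteq P_0\cup\cdots\cup P_{i-1}$ is \emph{finite}, so any ray contained in $H$ automatically has a tail in each $D_i$. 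It therefore suffices to find a ray in $H$. As $H$ is connected and infinite, König's lemma supplies one unless $H$ fails to be locally finite, and here the vertices of infinite degree in $H$ are forced to lie in the persistent set $W:=\bigcap_i V(D_i)$, since a vertex outside $W$ drops out of the nested $D_i$ and hence meets only finitely many $P_i$.

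Once $\omega$ is in hand and $W$ is finite, the contradiction is quick. By hypothesis $d_v(\omega)\ge m$, so $\omega$ contains $m$ pairwise disjoint rays; discarding finite initial segments I may assume these rays avoid the finite set $W$. Each such ray $T$ then starts outside $W$, hence outside $D_i$ for all large $i$, while (as $\omega$ lives in $D_i$) it has a tail inside $D_i$; being connected, $T$ must enter $D_i$ through a vertex of $\partial_v D_i$. Choosing one index $i$ large enough to work simultaneously for all $m$ rays, and using that they are disjoint, I find $m$ distinct vertices in $\partial_v D_i$, so $|\partial_v D_i|\ge m$, contradicting $|\partial_v D_i|\le m-1$. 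This crossing argument is the heart of why a descending chain of small-boundary regions cannot coexist with the assumption that all ends are ``fat''.

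The step I expect to be the real obstacle is the interface between the last two paragraphs: guaranteeing that a genuine diagonal ray exists and that $W$ is finite, rather than the chain degenerating by \emph{funnelling} through a fixed vertex set (think of an infinite star, where the paths $P_i$ all run through the centre and $H$ is rayless). Such a collapse is precisely the situation in which a finite profound region of boundary $<m$ appears, which is exactly what the failure of (a) forbids; so the negation of (a) has to be invested here, converting ``$H$ has a vertex of infinite degree'' (equivalently, $H-W$ has only finite components) into an honest finite profound region of small boundary and thereby ruling the collapse out. Making this conversion precise — controlling the $G$-boundary (not merely the $H$-boundary) of the funnelling structure, and handling the case where $W$ is infinite (where one should instead locate an irreducible region as in (b)), together with the harmless but necessary countable/uncountable case split — is where the genuine work lies; the chain construction and the crossing argument above are the easy skeleton.
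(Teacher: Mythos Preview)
The paper does not supply a proof of this lemma; it is quoted verbatim from \cite{hcs}, so there is no in-paper argument to compare your attempt against.

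On the substance: the obstacle you flag is real, and it does not close along the lines you sketch. From the mere negation of (b) you obtain only $D_{i+1}\subsetneq D_i$, not $D_{i+1}\subseteq D_i-\partial_v D_i$. Without the stronger nesting your crossing argument collapses: nothing prevents all the $\partial_v D_i$ from sharing a fixed vertex, the set $W=\bigcap_i V(D_i)$ from being infinite, or the $m$ disjoint rays of $\omega$ from simply living inside $W$ and never crossing any $\partial_v D_i$. Your proposed rescue --- converting a ``funnelling'' situation into a finite profound region of boundary $<m$ and invoking the negation of (a) --- does not go through either: the natural candidates (components of some $D_j$ minus the persistent boundary vertices) have $G$-boundary controlled only by $|\partial_v D_j|$ plus the number of neighbours of the deleted vertices inside that component, and the latter need not be small.

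The paper's own treatment of the edge analogue, Lemma~\ref{lem:inkl2}, shows the intended shape of the argument: one builds the chain with the stronger nesting $D_{i+1}\subseteq D_i-\partial_v D_i$ \emph{from the outset}, so that when the chain is infinite Lemma~\ref{lem:regions}(i) (which packages exactly your crossing argument) applies directly; when the construction halts at some $D_i$ one argues separately that the halting condition produces a $C'$ satisfying (a) or (b). Starting instead from the contrapositive of (a) and (b), as you do, hands you a chain too weak to feed into Lemma~\ref{lem:regions}, and that is the structural reason your sketch gets stuck where it does.
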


Observe that the outcome of Lemma~\ref{super} is invariant under modifications of the structure of $G-C$. Hence  in all applications we may assume that $d_v(\omega)\geq m$ only for ends $\omega$ of $G$ that have rays in $C$.

\medskip

We are now ready to prove  Theorem~\ref{thm:infQuant} (b).

\begin{proof}[Proof of  Theorem~\ref{thm:infQuant} (b)]
First of all, we claim that for every infinite region $H$ of $G$ it holds that:
\begin{equation}\label{eq:lick1}
\begin{minipage}[c]{0.8\textwidth}\em
There is a vertex $v\in V(H)$ of degree $\leq\frac 32k-1$ or an end of vertex-degree $\leq k$ with rays in $H$.
\end{minipage}\ignorespacesafterend 
\end{equation} 

In order to see~\eqref{eq:lick1}, we assume that there is no end as desired and apply Lemma~\ref{super} to $H$ with $m:=k+1$. This yields a profound region $H'\subseteq H$. We claim that (a) of~Lemma~\ref{super} holds; then we may use Lemma~\ref{lem:finReg} to find a vertex $w\in V(H')$ with $d(w)\leq 3k/2-1$. 

So, assume for contradiction that (b) of~ Lemma~\ref{super} holds.
 Since $G$ is $k$-connected there exists a finite family $\mathcal P$ of finite paths in $G$ such that each pair of vertices from $\partial_v H'$ is connected by $k$ pairwise internally disjoint paths from $\mathcal P$. Set 
\[
S:=\partial_v H'\cup V(\bigcup\mathcal P), 
\]
 and observe that $H'-S$ is still infinite. In particular, $H'-S$ contains a vertex~$v$. 
 
Since $G$ is vertex-minimally $k$-connected,  $v$ lies in a $k$-separator $T'$ of $G$. By the choice of $v\notin S$, no two vertices of $\partial_v H'$ are separated by $T'$. Thus all of $\partial_v H'-T'$ is contained in one component of $G-T'$. 

Let $C''$ be a component of $G-T'$ that does not contain any vertices from $\partial_v H'$. Note that as $G$ is $k$-connected, $v$ has a neighbour in $C''$. Hence $C''\subseteq H'-\partial_vH'$, and $H'':=G[C''\cup T']$ is a profound region with $H''\subseteq H'$. 

In fact, $H''\subsetneq H'$, which is clear if $H'=G$, and otherwise follows from the fact that $v\in T'-\partial_vH'$ and thus, because $|\partial_v H'|\geq k =|T'|$ we know that  $\partial_vH'-T'\neq\emptyset$. So, (b) implies that $k+1\leq|T'|=k$, a contradiction as desired. This proves~\eqref{eq:lick1}.\footnote{Observe that taking $H=G$, we have thus proved Theorem~\ref{thm:inf} (b).}

Now, let $T\subseteq V(G)$ be any separator of $G$ of size $k$ (a such exists by the vertex-minimality of $G$). First suppose that $G-T$ has at least one infinite component $C$. Then we  apply Lemma~\ref{lem:finReg} or~\eqref{eq:lick1} to any component of $G-C$ and find an end $\omega$ of vertex-degree $k$ with no rays in $C$, or a vertex $v\in V(G-C)$ of degree at most $ 3k/2-1$. Let $x$ denote the point found, that is, $x=\omega$ or $x=v$. 

Let $C'$ be the subgraph of $G$ induced by $C$ and all vertices of $T$ that have infinite degree into $C$. Then $C'$ is a region, and we may thus apply~\eqref{eq:lick1} to $C'$ in order  to find the second end/vertex of small (vertex)-degree. This second point is different from $x$ by the choice of $C'$.

It remains to treat the case when all components of $G-T$ are finite.  As we otherwise apply Theorem 2 (b), we may assume that  $G-T$ has infinitely many components. Hence, as $G$ has no $(k-1)$-separators, each $x\in T$ has infinite degree. This means that we may apply Lemma~\ref{lem:finReg} to any two $k$-regions $H_1$, $H_2$ with $\partial_v H_1=T=\partial_v H_2$ in order to find two vertices $v_1\in V(H_1)-\partial_v H_1$, $v_2\in V(H_2)-\partial_v H_2$, each of degree $\leq 3k/2-1$. 
\end{proof}

We remark that the bound on the vertex-degree given by Theorems~\ref{thm:inf} (b) and~\ref{thm:infQuant} (b) is best possible. Indeed, by the following lemma, which follows from Lemma 7.1 from~\cite{duality2}, the vertex-degree of the ends of  a $k$-connected locally finite graph has to be at least $k$.

\begin{lemma}\label{lem:converse}
 Let $k\in \N$, let $G$ be a locally finite graph, and let $\omega\in\Omega(G)$. Then 
$d_v(\omega)=k$ if and only if $k$ is the smallest integer such that every finite set $S \subseteq V (G)$
can be separated\footnote{We say a set $T\subseteq V(G)$ separates a set $S\subseteq V(G)$ from an end $\omega\in\Omega(G)$ if the unique component of $G-T$ that contains rays of $\omega$ does not contain vertices from $S$.} from $\omega$ with a set of $k$ vertices.
\end{lemma}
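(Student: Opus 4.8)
The plan is to prove the equivalent assertion that $d_v(\omega)=f(\omega)$, where $f(\omega)$ denotes the least integer $m$ for which every finite $S\subseteq V(G)$ can be separated from $\omega$ by $m$ vertices. First I would note that $f(\omega)$ is well defined, since the property ``every finite $S$ is separable by $m$ vertices'' is monotone in $m$: any separator can be padded by an extra vertex of the $\omega$-component, which is infinite because $G$ is locally finite. It then suffices to establish the two inequalities $f(\omega)\ge d_v(\omega)$ and $f(\omega)\le d_v(\omega)$, and the equivalence of the lemma follows by reading off $k=f(\omega)=d_v(\omega)$.

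The inequality $f(\omega)\ge d_v(\omega)$ is direct. Write $d:=d_v(\omega)$ and fix $d$ vertex-disjoint rays $R_1,\dots,R_d\in\omega$ (the supremum defining $d_v$ is attained, as cited). Let $S$ be the set of their initial vertices, so $|S|=d$. If some $T$ with $|T|\le d-1$ separated $S$ from $\omega$, then, the $R_i$ being pairwise disjoint, each vertex of $T$ lies on at most one $R_i$, so some $R_j$ avoids $T$ entirely; thus $R_j$ lies in a single component of $G-T$, which, containing the tail of $R_j$, is the $\omega$-component, yet it also contains the initial vertex of $R_j$ in $S$ --- contradicting separation. Hence no $d-1$ vertices separate this particular $S$ from $\omega$, so $f(\omega)\ge d$.

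The reverse inequality $f(\omega)\le d_v(\omega)$ is the substantial part: I must show that every finite $S$ can be separated from $\omega$ by $d_v(\omega)$ vertices. I would fix a finite connected exhaustion $G_0\subseteq G_1\subseteq\cdots$ with $\bigcup_n G_n=G$ and $S\subseteq V(G_0)$, and let $C_n$ be the $\omega$-component of $G-V(G_n)$. Writing $\kappa_n$ for the least number of vertices separating $S$ from $C_n$ and $\kappa(S,\omega)$ for the least separating $S$ from $\omega$, one has $C_{n+1}\subseteq C_n$, so $(\kappa_n)$ is non-increasing and eventually equal to some $\kappa_\infty$. A short check gives $\kappa(S,\omega)=\kappa_\infty$: a minimum $S$--$C_n$ separator chosen disjoint from $C_n$ also separates $S$ from $\omega$, while any finite $S$--$\omega$ separator separates $S$ from $C_n$ once $n$ is large enough that $C_n$ lies in its $\omega$-side. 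By the finite Menger theorem (see \cite{diestelBook05}), for each $n$ there are $\kappa_n\ge\kappa_\infty$ pairwise disjoint $S$--$C_n$ paths.

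It remains to extract from these nested path-systems $\kappa_\infty$ pairwise disjoint rays in $\omega$; this forces $\kappa(S,\omega)=\kappa_\infty\le d_v(\omega)$ and completes the argument. Since all the paths start in the finite set $S$, I would pass, by a K\"onig's-infinity-lemma / compactness argument that exploits local finiteness to keep each level finite, to a subsequence of indices $n$ along which the initial segments stabilise level by level, and then diagonalise to obtain $\kappa_\infty$ disjoint rays whose tails run beyond every $G_n$ on the $\omega$-side, hence belong to $\omega$. I expect this limiting step to be the main obstacle: the care lies in guaranteeing that the extracted objects are genuinely disjoint \emph{rays} converging to $\omega$ (not to a different end, nor degenerating into a merely finite family), which is exactly where local finiteness is indispensable. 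This max-flow/min-cut duality between disjoint rays of $\omega$ and finite $S$--$\omega$ separators is precisely what Lemma 7.1 of \cite{duality2} packages, and invoking it allows one to bypass the hand-built compactness argument.
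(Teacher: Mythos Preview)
The paper does not actually prove this lemma: it simply states that it ``follows from Lemma 7.1 from~\cite{duality2}'' and moves on. Your proposal is correct and supplies exactly the standard argument that underlies that cited result---the Menger-plus-K\"onig's-lemma extraction of $\kappa_\infty$ disjoint rays to $\omega$---and you yourself note at the end that invoking Lemma~7.1 of~\cite{duality2} packages this step; so your treatment is consistent with, and strictly more explicit than, the paper's.
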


For non-locally finite graphs, the minimum size of an $S$--$\omega$ separator corresponds to the vertex-/edge-degree of $\omega$ plus the number of dominating vertices of $\omega$. See~\cite{duality2}.

\medskip

One might now ask whether it is possible to to achieve a better upper bound on the degree of the `small degree vertices' than the one given by Theorems~\ref{thm:inf} and~\ref{thm:infQuant} (b), if one accepts a worse bound on the vertex-degree of the `small degree ends'. The answer is no. This is illustrated by the following example for even $k$ (and for odd $k$ there are similar examples).

\bigskip

\begin{figure}[ht]
      \centering
      \includegraphics[scale=0.55]{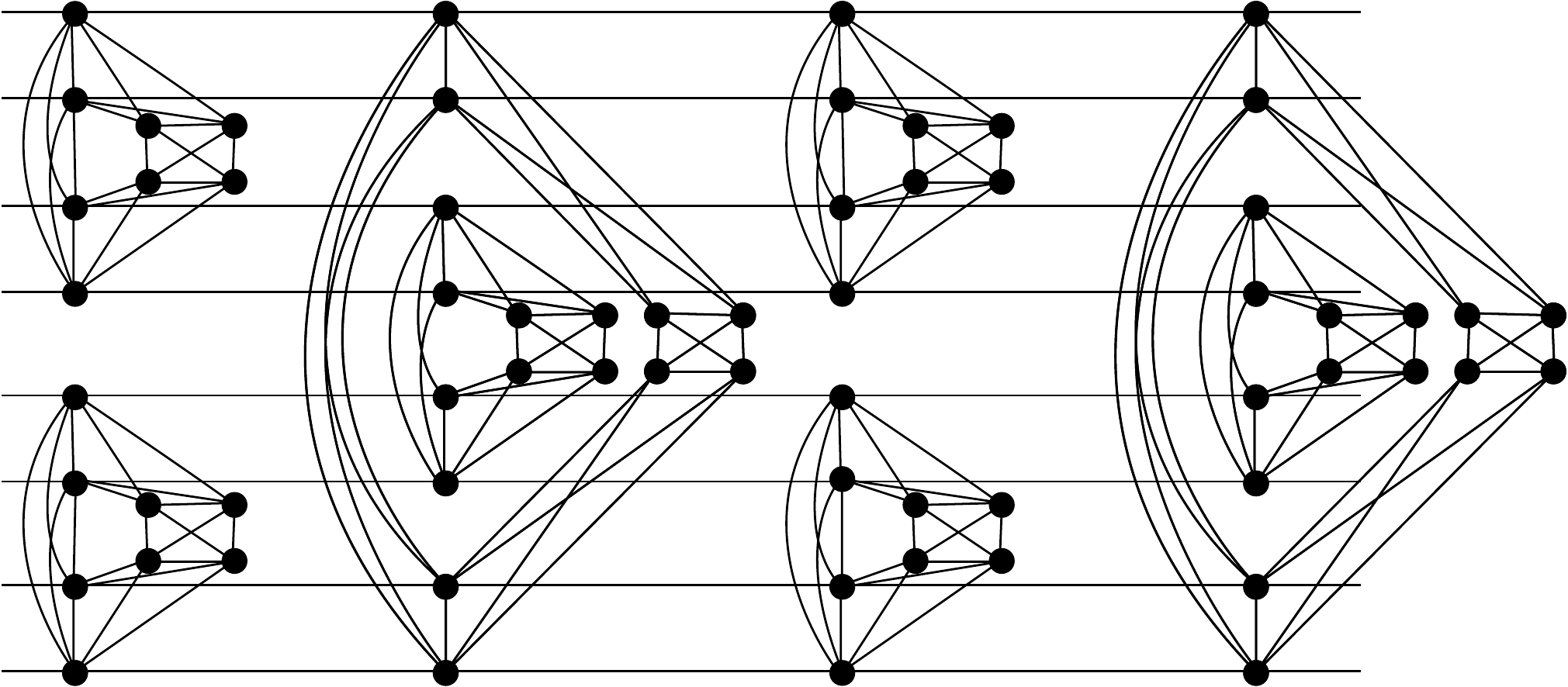}
      \caption{A vertex-minimally $k$-connected graph $G$ with $d(v)\geq\frac 32k-1$ and $d_v(\omega)\gg k$ for all $v\in V(G)$ and $\omega\in\Omega(G)$ for $k=4$. \label{fig:Vdel}}
\end{figure}

 Let  $\ell\in\N\cup\{ \aleph_0\}$, and take the disjoint union of $\ell$ double-rays $R_1,\ldots, R_\ell$. For simplicity, assume that $k$ divides $\ell$. For each $i\in\Z$, take $\ell/k$ copies of the strong product of $C_4$ with $K^{k/2}$, and identify the vertices that belong to the first or the last copy of $K^{k/2}$ with the $i$th vertices the $R_j$. This can be done in a way so that the obtained graph, which is easily seen to be vertex-minimally $k$-connected, has two ends of vertex-degree $\ell$, while the vertices have degree either $3k/2-1$ or $3k/2+1$.

\section{Edge-minimally $k$-edge-connected graphs}\label{sec:ee}
 
 We now prove part (c) of  Theorem~\ref{thm:infQuant}.
We start by proving a lemma that will be useful also in Section~\ref{sec:ve}:

\begin{lemma}\label{lem:regions}
 Let $G$ be a graph and let $(D_i)_{i\in\N}$ be a sequence of regions such that $D_{i+1}\subseteq D_i-\partial_v D_i$ for all $i\in\N$. Then there is an end $\omega\in\Omega (G)$  that has a ray in each of the $D_i$ so that
\begin{enumerate}[(i)]
 \item if $|\partial_v D_i|\leq k$ for all $i$, then $d_v(\omega)\leq k$, and
\item if $|\partial_e D_i|\leq k$ for all $i$, then $d_e(\omega)\leq k$.
\end{enumerate}
\end{lemma}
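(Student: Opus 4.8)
The plan is to realize $\omega$ as the ``limit direction'' of the nested regions, then show that \emph{every} ray of $\omega$ eventually enters each $D_i$, and finally read off the degree bounds by counting how such rays must cross the finite boundaries $\partial_v D_i$ resp.\ $\partial_e D_i$. First note that $D_1\supseteq D_2\supseteq\cdots$, since $D_{i+1}\subseteq D_i-\partial_v D_i\subseteq D_i$; all $D_i$ are infinite (otherwise they contain no ray), and I would assume $\bigcap_i D_i=\emptyset$, the only case of interest — in the component meeting the $D_i$, the inclusion $D_{i+1}\subseteq D_i-\partial_v D_i$ otherwise forces $\partial_v D_i=\emptyset$ and $D_i=G$, a degenerate situation. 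To build $\omega$, pick for each $i$ a vertex $w_i\in D_{i+1}$; since $D_i$ is connected and contains both $w_i$ and $w_{i+1}$, choose a finite $w_i$--$w_{i+1}$ path $P_i\subseteq D_i$, and let $W=P_1P_2\cdots$ be the concatenated walk. As the $D_i$ decrease to $\emptyset$, each vertex lies in only finitely many $D_i$, hence meets only finitely many $P_j$; thus $W$ visits infinitely many vertices, each finitely often, and I can extract a ray $R\subseteq W$. Its tail beyond $P_i$ lies in $D_i$, so $R$ has a ray in every $D_i$; let $\omega$ be the end containing $R$.

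The crucial second step is to upgrade this to: every ray $R'\in\omega$ has a tail in each $D_i$. Writing $S_i:=\partial_v D_i$, the tail of $R$ lies in $D_{i+1}\subseteq D_i-S_i$, which is a union of components of $G-S_i$; let $K_i$ denote the component containing this tail, so $K_i\subseteq D_i$. Since $R'$ is equivalent to $R$, the finite set $S_i$ does not separate them, whence $R'$ has a tail in $K_i\subseteq D_i$. Here it is essential that the containment is into the \emph{interior} $D_i-\partial_v D_i$: this is exactly what pins the tail of $R$ inside a single component of $G-S_i$ and thereby forces all equivalent rays into $D_i$.

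Now the degree bounds follow by a uniform counting argument. For (i), suppose $\omega$ contains vertex-disjoint rays $R^1,\dots,R^m$, and let $F$ be their finite set of initial vertices. Since the $D_i$ decrease to $\emptyset$, I choose $i$ with $F\cap D_i=\emptyset$. Each $R^j$ starts outside $D_i$ but has a tail inside $D_i$, so it has a first vertex $b^j\in D_i$, whose predecessor on $R^j$ lies in $G-D_i$; hence $b^j\in\partial_v D_i=S_i$. Vertex-disjointness makes $b^1,\dots,b^m$ distinct, so $m\le|S_i|\le k$, giving $d_v(\omega)\le k$. For (ii) the argument is identical with edges: with $F\cap D_i=\emptyset$, each edge-disjoint ray $R^j$ traverses the edge $a^jb^j$ from its last vertex $a^j\in G-D_i$ before entering $D_i$ to $b^j\in D_i$, and $a^jb^j\in E(D_i,G-D_i)=\partial_e D_i$; edge-disjointness makes these edges distinct, so $m\le|\partial_e D_i|\le k$, giving $d_e(\omega)\le k$. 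As the suprema defining $d_v$ and $d_e$ are attained, this bounds the end degrees.

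I expect the main obstacle to be precisely the second step — converting ``$\omega$ has a ray in each $D_i$'' into ``every ray of $\omega$ has a tail in each $D_i$'' — since only this makes a bound on the (finite) boundary translate into a bound on the number of disjoint rays; the counting in the third step is then routine. A secondary point requiring care is the clean extraction of the ray $R$ from the walk $W$ and the explicit exclusion of the degenerate case $\bigcap_i D_i\neq\emptyset$.
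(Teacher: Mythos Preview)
Your proof is correct and follows essentially the same route as the paper's: build a ray $R$ with a tail in every $D_i$, then use the finite boundaries $\partial_v D_n$ (resp.\ $\partial_e D_n$) to cap the number of disjoint rays in $\omega$ once their starting vertices have been pushed outside $D_n$. The paper is merely terser---it leaves your ``step~2'' (that every ray of $\omega$ has a tail in each $D_i$) implicit in the final sentence of its argument---whereas you spell it out; your explicit handling of the degenerate case $\bigcap_i D_i\neq\emptyset$ is likewise something the paper suppresses.
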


\begin{proof}
As all the $D_i$ are connected, it is easy to construct a ray $R$ which has a subray in each of the $D_i$. Say $R$ belongs to the end $\omega\in\Omega(G)$. We only show (i), as (ii) can be proved analogously.

Suppose for contradiction that $|\partial_v D_i|\leq k$ for all $i$, but $d_v(\omega)> k$. Then $\omega$ contains a set $\mathcal R$ of $k+1$ disjoint rays. Let $S$ be the set of all starting vertices of these rays. Since $D_i\subseteq D_{i-1}-\partial_v D_{i-1}$ for all $i$, there is an $n\in\N$ such that $S\cap V(D_n)=\emptyset$. (To be precise, one may take $n:=\max_{s\in S, v\in \partial_v D_0}dist(s,v)+1$.) But then, it is impossible that all rays of $\mathcal R$ have subrays in $D_n$, as only $k$ of them can pass disjointly through $\partial_v D_n$.
\end{proof}

We also need the following lemma from~\cite{hcs}.

\begin{lemma}$\!\!${\bf\cite[{\rm Lemma 3.2}]{hcs} }
\label{lem:inkl}
Let $m\in\N$ and let $D\neq\emptyset$ be a region of a graph $G$ so that $|\partial_e D|<m$ and so that
$|\partial_e D'|\geq m$ for every
non-empty region $D'\subseteq D-\partial_v D$ of $G$. Then there is an inclusion-minimal non-empty region $H\subseteq D$ with $|\partial_e H|<m$.
\end{lemma}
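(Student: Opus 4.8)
The plan is to obtain $H$ by applying Zorn's lemma to the family
\[
\mathcal F := \{\, H' : H' \text{ is a non-empty region with } H'\subseteq D \text{ and } |\partial_e H'|<m \,\},
\]
ordered by inclusion, and to extract an inclusion-minimal element. Since $D\in\mathcal F$, the family is non-empty, so the only thing requiring work is to verify that every chain in $\mathcal F$ has a lower bound lying in $\mathcal F$; Zorn's lemma then supplies an inclusion-minimal member, which is the desired $H$.

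First I would record the consequence of the hypothesis that drives the whole argument: \emph{every $H'\in\mathcal F$ meets the finite set $\partial_v D$.} Indeed, if $V(H')\cap\partial_v D=\emptyset$ then $H'$ is a non-empty region contained in $D-\partial_v D$, whence $|\partial_e H'|\geq m$ by assumption, contradicting $H'\in\mathcal F$. Because $\partial_v D$ is finite, this yields the compactness needed to handle chains. Given a chain $(C_\alpha)$ in $\mathcal F$, the sets $V(C_\alpha)\cap\partial_v D$ form a chain of non-empty subsets of a finite set, so there are only finitely many distinct such sets and the chain has a smallest member; hence $W:=\bigcap_\alpha V(C_\alpha)$ contains a vertex $x\in\partial_v D$, and in particular $W\neq\emptyset$.

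Next I would set $H^*$ to be the component of $G[W]$ containing $x$ and check that $H^*\in\mathcal F$ is a lower bound for the chain. Since $V(H^*)\subseteq W\subseteq V(C_\alpha)$ and all subgraphs involved are induced, $H^*\subseteq C_\alpha$ for every $\alpha$, so $H^*$ lies below every chain member. The crucial point is $|\partial_e H^*|<m$, which I would prove by contradiction: suppose $u_1v_1,\dots,u_mv_m\in\partial_e H^*$ with $u_j\in V(H^*)$ and $v_j\notin V(H^*)$. As $H^*$ is a \emph{component} of $G[W]$, each $v_j$ lies outside $W$, so there is an $\alpha_j$ with $v_j\notin V(C_{\alpha_j})$; taking the smallest of $C_{\alpha_1},\dots,C_{\alpha_m}$ in the chain, say $C$, gives $v_j\notin V(C)$ for all $j$ while $u_j\in V(H^*)\subseteq V(C)$, so all $m$ edges lie in $\partial_e C$, contradicting $C\in\mathcal F$. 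Finally, from $|\partial_e H^*|<m$ I get $|\partial_v H^*|\leq|\partial_e H^*|<m$, since each boundary vertex is the inner endpoint of a distinct boundary edge; thus $H^*$ has finite vertex-boundary and is a region, and together with its connectedness and $x\in V(H^*)$ this shows $H^*\in\mathcal F$.

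The main obstacle is the edge-boundary bound for the limit region $H^*$: a priori $\partial_e H^*$ might collect one boundary edge from each of infinitely many $C_\alpha$, and there is no monotonicity of the edge-boundaries along the chain to exploit directly. The resolution is exactly that $H^*$ is a \emph{component} of $G[W]$, so every edge leaving $H^*$ must in fact leave $W$ altogether; this lets me push any finite collection of $m$ boundary edges simultaneously into a single smallest chain member, where the defining inequality $|\partial_e C|<m$ delivers the contradiction. The non-emptiness of $W$—which would fail for an arbitrary nested family of regions—is in turn secured by the first observation, which forces every member of $\mathcal F$ into the finite set $\partial_v D$.
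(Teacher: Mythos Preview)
The paper does not supply its own proof of this lemma; it is quoted verbatim from \cite{hcs} and used as a black box. So there is no in-paper argument to compare against, and your proposal stands on its own.

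Your Zorn's-lemma argument is correct. The two non-trivial points are handled properly: (i) non-emptiness of $W=\bigcap_\alpha V(C_\alpha)$ follows because every member of $\mathcal F$ must meet the \emph{finite} set $\partial_v D$ (else it would violate the hypothesis on regions inside $D-\partial_v D$), so the intersections $V(C_\alpha)\cap\partial_v D$ form a descending chain of non-empty subsets of a finite set and hence stabilise at some non-empty $S\subseteq W$; and (ii) the edge-boundary bound for the candidate lower bound $H^*$ holds because you took $H^*$ to be a \emph{component} of $G[W]$, so any edge leaving $H^*$ leaves $W$ entirely, and finitely many such edges can be pushed into a single chain member $C$ by comparability. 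The remaining checks---that $H^*$ is induced, connected, has $|\partial_v H^*|\le|\partial_e H^*|<m$, and sits below every $C_\alpha$---are routine and you cover them. One cosmetic remark: when you write ``the chain has a smallest member'' you mean the chain of intersections $V(C_\alpha)\cap\partial_v D$, not the chain $(C_\alpha)$ itself; the phrasing could be misread, but the logic you actually use is sound.
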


Combined, the two lemmas yield a lemma similar to Lemma~\ref{super} from the previous section:

\begin{lemma}
\label{lem:inkl2}
Let $D\neq\emptyset$ be a region of a graph $G$ so that $|\partial_e D|<m$ and so that
$d_e(\omega)\geq m$ for every end $\omega\in\Omega(G)$ with rays in $D$. Then there is an inclusion-minimal non-empty  region $H\subseteq D$ with $|\partial_e H|<m$.
\end{lemma}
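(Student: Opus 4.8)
The plan is to bridge the gap between the hypothesis we are given and the hypothesis of Lemma~\ref{lem:inkl}. Lemma~\ref{lem:inkl} requires that \emph{every} non-empty region $D'\subseteq D-\partial_v D$ satisfy $|\partial_e D'|\ge m$, whereas here we only control the edge-degrees of the ends that have rays in $D$. I would therefore introduce the following dichotomy for a non-empty region $D'$ with $|\partial_e D'|<m$: either (A) every non-empty region $D''\subseteq D'-\partial_v D'$ has $|\partial_e D''|\ge m$, or (B) some non-empty region $D''\subseteq D'-\partial_v D'$ has $|\partial_e D''|<m$. In case (A), Lemma~\ref{lem:inkl} applies verbatim to $D'$ and produces an inclusion-minimal non-empty region $H\subseteq D'$ with $|\partial_e H|<m$.

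Starting from $D_0:=D$, which is non-empty with $|\partial_e D_0|<m$ by assumption, I would iterate this dichotomy. As long as case (B) occurs, I pick a witness $D_{i+1}\subseteq D_i-\partial_v D_i$ with $|\partial_e D_{i+1}|<m$ and continue; as soon as case (A) occurs I stop and invoke Lemma~\ref{lem:inkl} on the current region $D_i$. The key point is that inclusion-minimality is an intrinsic property of a region: if $H$ has no proper non-empty subregion of edge-boundary $<m$, then $H$ is inclusion-minimal with that property in $D$ as well, since any competitor $H'\subsetneq H$ automatically lies in $H\subseteq D_i\subseteq D$. Hence whenever case (A) is reached, the lemma is established, with the minimal region found inside $D_i$ being minimal for the whole of $D$.

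It remains to rule out that case (B) occurs at every stage. Should it, the resulting sequence $(D_i)_{i\in\N}$ of non-empty regions satisfies $D_{i+1}\subseteq D_i-\partial_v D_i$ and $|\partial_e D_i|\le m-1$ for all $i$. Lemma~\ref{lem:regions}(ii), applied with $k:=m-1$, then yields an end $\omega\in\Omega(G)$ having a ray in each $D_i$ and with $d_e(\omega)\le m-1<m$. In particular $\omega$ has a ray in $D_0=D$, which contradicts the hypothesis that $d_e(\omega)\ge m$ for every end with rays in $D$. Therefore the iteration must terminate in case (A), and the proof is complete.

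The step I expect to require the most care is the bookkeeping of the iteration: checking that the two alternatives (A) and (B) genuinely exhaust all possibilities at each stage, and that the minimal region extracted from an intermediate $D_i$ is minimal with respect to the original $D$. The latter is exactly the intrinsic-minimality observation above, so once it is isolated the argument is just a clean combination of Lemmas~\ref{lem:regions} and~\ref{lem:inkl}, with the end-degree hypothesis entering only to forbid an infinite descending chain.
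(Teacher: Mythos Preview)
Your proposal is correct and follows essentially the same route as the paper: iterate the choice of a smaller region $D_{i+1}\subseteq D_i-\partial_v D_i$ with $|\partial_e D_{i+1}|<m$ as long as possible, invoke Lemma~\ref{lem:inkl} the moment this fails, and use Lemma~\ref{lem:regions}(ii) to rule out an infinite descent. Your explicit remark that inclusion-minimality is intrinsic (so the $H$ produced inside some $D_i$ is automatically minimal in $D$) is a point the paper leaves implicit, but otherwise the arguments coincide.
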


\begin{proof}
Set $D_0:=D$ and inductively for $i\geq 1$, choose a non-empty region $D_i\subseteq D_{i-1}-\partial_v D_{i-1}$ such that $|\partial_e D_i|<m$ (if such a region $D_i$ exists). If at some step~$i$ we are unable to find a region $D_i$ as above, then we apply Lemma~\ref{lem:inkl} to $D_{i-1}$ to find the desired region $H$. On the other hand, if we end up defining an infinite sequence of regions, then  Lemma~\ref{lem:regions} (ii) tells us that there is an end $\omega$ with rays in $D$ and $d_e(\omega)<m$, a contradiction.
\end{proof}

We are now ready to prove part (c) of our main theorem:

\begin{proof}[Proof of Theorem~\ref{thm:infQuant} (c)]
Since  $G$ is edge-minimally $k$-edge-connected, $G$ has a non-empty region $D$ such that $|\partial_e D|= k$, and such that $G-D\neq\emptyset$. We shall find a vertex or end of small (edge)-degree in $D$; then one may repeat the procedure  for $G-D$ in order to find the second point.

First, we apply Lemma~\ref{lem:inkl2} with $m:=k+1$ to obtain an end as desired or an inclusion-minimal non-empty  region $H\subseteq D$ with $|\partial_e H|\leq k$. If $V(H)$ should consist of only one vertex, then this vertex has degree $k$, as desired. So suppose that $V(H)$ has more than one vertex, that is, $E(H)$ is not empty.

Let $e\in E(H)$. By the edge-minimal $k$-edge-connectivity of $G$ we know that~$e$ belongs to some cut $F$ of $G$ with $|F|=k$. Say $F=E(A,B)$ where $A,B\neq\emptyset$ partition $V(G)$. Since $e\in F$, neither $A_H:=A\cap V(H)$ nor $B_H:=B\cap V(H)$ is  empty.

So, $|\partial_e A_H|>k$ and $|\partial_e B_H|>k$, by the minimality of $H$. But then, since $|\partial_e H|\leq k$ and $|F|\leq k$, we obtain that
\begin{align*}
|\partial_e (A\setminus A_H)|+|\partial_e (B\setminus B_H)| & \leq 2|\partial_e H|+2|F|-|\partial_e A_H|-|\partial_e B_H|\\
& <4k-2k\\ &=2k.
\end{align*}

Hence, at least one of $|\partial_e (A\setminus A_H)|$, $|\partial_e (B\setminus B_H)|$, say the former, is strictly smaller than $k$. Since $G$ is $k$-edge-connected, this implies that $A\setminus A_H$ is empty. But then $A\subsetneq V(H)$, a contradiction to the minimality of $H$.
\end{proof}

\medskip

We dedicate the rest of this section to multigraphs, that is, graphs with parallel edges, which often appear to be the more appropriate objects when studying edge-connectivity. Defining the edge-degree of an end $\omega$ of a multigraph in the same way as for graphs, that is, as the supremum of the cardinalities of the sets of edge-disjoint rays from $\omega$, and defining $V_k$ and $\Omega^e_k$ as earlier for graphs, we may apply the proof of Theorem~\ref{thm:infQuant} (c) with only small modifications\footnote{We will then have to use a version of Lemma~\ref{lem:inkl} for multigraphs. Observe that such a version holds, as we may apply Lemma~\ref{lem:inkl} to the (simple) graph obtained by subdividing all edges of the multigraph. This procedure will not affect the degrees of the ends. The rest of the proof will then go through replacing everywhere `graph' with `multigraph'.} to multigraphs.  We thus get:

\begin{corollary}\label{cor:multiedgeedge2}
Let $G$ be an edge-minimally $k$-edge-connected multigraph. Then $|V_k(G)\cup\Omega_k(G)^e|\geq 2$.
\end{corollary}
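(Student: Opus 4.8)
The plan is to adapt the proof of Theorem~\ref{thm:infQuant}~(c) to multigraphs, verifying that each ingredient survives the passage to parallel edges. The cleanest way I would do this is via the subdivision trick sketched in the footnote: let $G$ be the given edge-minimally $k$-edge-connected multigraph, and form the simple graph $G'$ by subdividing every edge of $G$ once, inserting a new degree-$2$ vertex in the interior of each edge. Subdivision preserves edge-cuts (a cut $E(A,B)$ in $G$ corresponds to a cut in $G'$ of the same size, routing through the subdivision vertices), so $G'$ is again $k$-edge-connected and, crucially, edge-minimally so — removing any original edge of $G$, which now sits as a path of length two in $G'$, still destroys $k$-edge-connectivity. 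The subdivision vertices all have degree $2$, so for $k\geq 2$ they never enter $V_k(G')$ in a way that produces spurious small-degree witnesses, and for $k=1$ one handles the path/double-ray cases by hand as in the introduction.

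The key point I would stress is that subdivision does not change the edge-degrees of ends: a set of edge-disjoint rays in $G$ corresponds bijectively to a set of edge-disjoint rays in $G'$ (each ray simply gets its edges subdivided), so $d_e(\omega)$ is the same computed in either graph, and the correspondence $\Omega(G)\cong\Omega(G')$ is a natural bijection since subdivision does not alter which rays are separated by finite vertex sets. Thus $\Omega^e_k(G)\cong\Omega^e_k(G')$. With this in hand, I would run the entire argument of Theorem~\ref{thm:infQuant}~(c) inside $G'$: apply Lemma~\ref{lem:inkl2} (in the simple-graph form already proved) with $m:=k+1$ to a region $D$ with $|\partial_e D|=k$ to extract either an end of edge-degree $\leq k$ or an inclusion-minimal region $H$ with $|\partial_e H|\leq k$, then run the edge-cut counting step with the cut $F=E(A,B)$ to derive a contradiction unless $H$ is a single vertex of degree $k$.

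The one genuine subtlety, and the step I expect to require the most care, is translating a small-degree witness found in $G'$ back into one in $G$. An end of $G'$ with $d_e\leq k$ gives directly an end of $G$ with $d_e\leq k$, so ends cause no trouble. The vertex case is where one must be careful: the inclusion-minimal region $H$ in $G'$ might a priori reduce to a single \emph{subdivision} vertex rather than an original vertex of $G$. However, a subdivision vertex has edge-boundary exactly $2$ into $G'$, and for $k\geq 2$ the minimality analysis forces $H$ to contain an original vertex — indeed, when $H$ is a lone subdivision vertex $x$, its two incident edges in $G'$ lead to original endpoints, and the cut argument (choosing $e\in E(H)$, splitting along $F$) can be set up so that $H$ is required to carry at least one edge of the original multigraph, i.e.\ an original vertex of degree $k$. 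I would phrase the region-minimality step so that it lands on the original vertex set; concretely, one restricts attention to regions whose boundary edges and interior correspond to genuine pieces of $G$, which the counting inequality $|\partial_e(A\setminus A_H)|+|\partial_e(B\setminus B_H)|<2k$ delivers exactly as before. Once this bookkeeping is settled, the conclusion $|V_k(G)\cup\Omega^e_k(G)|\geq 2$ follows by finding one witness in $D$ and a second by repeating the argument on $G-D$, precisely as in the simple-graph proof.
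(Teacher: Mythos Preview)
Your approach has a fatal gap right at the start: the subdivided graph $G'$ is \emph{not} $k$-edge-connected for $k\geq 3$. Every subdivision vertex $x_e$ has degree~$2$, so the two edges incident with $x_e$ form an edge-cut of size~$2$ in $G'$. Your sentence ``subdivision preserves edge-cuts \dots\ so $G'$ is again $k$-edge-connected'' only verifies one direction of the correspondence (cuts of $G$ lift to cuts of $G'$ of the same size); the reverse fails because $G'$ acquires new small cuts that isolate subdivision vertices. Since $G'$ is not $k$-edge-connected, it is certainly not edge-minimally $k$-edge-connected, and the proof of Theorem~\ref{thm:infQuant}~(c) cannot be run inside $G'$ at all: the step deducing $A\setminus A_H=\emptyset$ from $|\partial_e(A\setminus A_H)|<k$ uses $k$-edge-connectivity essentially. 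Your later worry about the minimal region $H$ landing on a subdivision vertex is thus moot --- the argument collapses before that point --- though note that for $k=2$ (where $G'$ \emph{is} $2$-edge-connected) that worry would be genuine and your proposed fix does not work either, since a single-vertex region $H$ has $E(H)=\emptyset$ and there is no edge $e\in E(H)$ to feed into the cut $F$.

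The paper's route is different and avoids this trap. Subdivision is invoked \emph{only} to port Lemma~\ref{lem:inkl} to multigraphs: that lemma is a purely structural statement about regions and edge-boundaries, with no connectivity hypothesis, so it is legitimate to prove it by passing to the simple subdivision and pulling the minimal region back. Everything else --- Lemma~\ref{lem:regions}, Lemma~\ref{lem:inkl2}, and the cut-counting contradiction with $A_H,B_H$ --- is carried out directly in the multigraph $G$ itself, where $k$-edge-connectivity and edge-minimality are available by assumption. You should restructure your argument the same way: keep the subdivision as a device for Lemma~\ref{lem:inkl} only, and run the main proof verbatim in $G$.
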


In particular, every finite edge-minimally $k$-edge-connected multigraph has at least two vertices of degree $k$. 

However, a statement in the spirit of
Theorem~\ref{thm:finQuant} (c) does not hold for multigraphs, no matter whether they are finite or not. For this, it suffices to consider the graph we obtain by multiplying the edges of a finite or infinite path by~$k$. This operation results in  an edge-minimally $k$-edge-connected  multigraph which has no more than the two vertices/ends of (edge)-degree $k$ which were promised by Corollary~\ref{cor:multiedgeedge2}.

\section{Vertex-minimally $k$-edge-connected graphs}\label{sec:ve}

In this section we prove Theorem~\ref{thm:infQuant} (d). The proof is based on Lemma~\ref{lem:vefin}, which at once yields  Theorem~\ref{thm:finQuant} (d), the finite version of  Theorem~\ref{thm:infQuant} (d). The idea of the proof of this lemma (in particular Lemma~\ref{lem:findthevertex}) is 
similar to Mader's original proof of  Theorem~\ref{thm:finQuant} (d) in~\cite{maderKritischKanten}.\footnote{But as~\cite{maderKritischKanten} does not contain the statement we need for finding the second vertex/end of small degree, we cannot make use of it here. 
}

 We need one auxiliary lemma before we get to Lemma~\ref{lem:vefin}.
For a set $X\subseteq V(G)\cup E(G)$ in a graph $G$ write $X_V:=X\cap V(G)$ and $X_E:=X\cap E(G)$.  

\begin{lemma}\label{lem:findthevertex}
 Let $k\in\N$. Let $G$ be a graph,  let $S\subseteq V(G)\cup E(G)$ with $|S|\leq k$, and let $C$ be a component of $G-S$ so that $|C|\leq |S_E|$. Then $C$ contains a vertex of degree at most $k$.
\end{lemma}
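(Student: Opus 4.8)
```latex
\textbf{Proof proposal.}
My plan is to bound the number of edges incident to $C$ by a counting argument and then deduce that some vertex of $C$ must have small degree. Let me set up the notation. Write $C$ for the component of $G-S$ with $|C| \le |S_E|$, and recall $|S| = |S_V| + |S_E| \le k$. Every edge of $G$ that is incident to a vertex of $C$ is either an internal edge of $C$ (both endpoints in $C$) or leaves $C$. An edge leaving $C$ cannot go to another component of $G-S$ (since $C$ is a component), so it must either end in $S_V$ or \emph{be} an edge of $S_E$. The key observation is that the edges of $S_E$ are exactly the edges removed to form $G-S$, so an edge of $C$-to-outside that is not counted by an endpoint in $S_V$ must be one of the at most $|S_E|$ edges in $S_E$.

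\medskip

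The main step is to estimate $\sum_{v \in C} d_G(v)$, the sum of degrees in $G$ taken over the vertices of $C$. First I would count each internal edge of $C$ twice, contributing $2\,|E(C)|$. Since $C$ has $|C|$ vertices, we have $2\,|E(C)| \le |C|(|C|-1)$. Next, the edges from $C$ to $S_V$ contribute at most $|C| \cdot |S_V|$ to the degree sum (each of the $|C|$ vertices sends at most $|S_V|$ such edges, since there are only $|S_V|$ possible targets in a simple graph). Finally, each edge of $S_E$ with an endpoint in $C$ contributes at most $1$ per such endpoint, and since there are at most $|S_E|$ edges in $S_E$, these contribute at most $|S_E|$ to the degree sum. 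Putting these together,
\begin{equation*}
\sum_{v \in C} d_G(v) \le |C|(|C|-1) + |C|\,|S_V| + |S_E|.
\end{equation*}

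\medskip

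From here the goal is to show the average degree over $C$ is below $k$, forcing a vertex of degree at most $k$. Dividing by $|C|$ (which is nonempty, as $C$ is a component), the average degree is at most
\begin{equation*}
(|C|-1) + |S_V| + \frac{|S_E|}{|C|}.
\end{equation*}
Now I use the hypothesis $|C| \le |S_E|$, which gives $|S_E|/|C| \ge 1$ but more usefully $|C| - 1 + |S_E|/|C| \le |S_E| - 1 + 1 = |S_E|$ needs checking; the clean route is to instead keep the bound in the form $|C|(|C|-1) + |C|\,|S_V| + |S_E|$ and use $|C| \le |S_E|$ directly to replace the trailing $+|S_E|$ by $+|C|$, yielding $\sum_{v\in C} d_G(v) \le |C|(|C|-1) + |C|\,|S_V| + |C| = |C|\big((|C|-1) + |S_V| + 1\big) = |C|\big(|C| + |S_V|\big)$, and then bounding $|C| \le |S_E|$ once more gives average degree at most $|S_E| + |S_V| = |S| \le k$. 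Hence some vertex of $C$ has degree at most $k$.

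\medskip

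The step I expect to be the main obstacle is the careful bookkeeping of which edges incident to $C$ are charged to $S_V$ versus $S_E$, and in particular making sure the $S_E$-edges with \emph{both} endpoints in $C$ (if any such edge were in $S_E$) or with an endpoint in $S_V$ are not double-counted in a way that breaks the inequality. A subtle point is whether an edge of $S_E$ can have an endpoint in $S_V$; in the degree sum $\sum_{v\in C} d_G(v)$ only endpoints lying in $C$ matter, so each $S_E$-edge is charged at most once (for its $C$-endpoint, if it has one), which is what the bound $+|S_E|$ records. The replacement of $+|S_E|$ by $+|C|$ via the hypothesis $|C| \le |S_E|$ is the crucial use of the assumption, and I would double-check that no stronger use of $|C| \le |S_E|$ is needed. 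If the cleaner inequality chain above does not close with exact constant $k$, I would reexamine whether the bound $|C| \le |S_E|$ should be applied to the leading $|C|(|C|-1)$ term as well.
```
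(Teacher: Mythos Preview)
Your counting setup is correct and essentially matches the paper's approach: both arguments bound the edges incident to $C$ in terms of $|C|$, $|S_V|$, and $|S_E|$. Your inequality
\[
\sum_{v\in C} d_G(v)\ \le\ |C|(|C|-1) + |C|\,|S_V| + |S_E|
\]
is valid (edges with both endpoints in $C$, including any $S_E$-edges that happen to be internal to $C$, are absorbed in the $\binom{|C|}{2}$ bound; edges from $C$ to another component of $G-S$ must lie in $S_E$).

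The error is the next step. You write: ``use $|C|\le|S_E|$ directly to replace the trailing $+|S_E|$ by $+|C|$''. That is the wrong direction: from $|C|\le|S_E|$ you cannot conclude $|S_E|\le|C|$, so you may not shrink the last summand in an upper bound. The chain that follows therefore does not establish average degree $\le k$.

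The fix is exactly what you hint at in your final sentence: apply $|C|\le|S_E|$ to the \emph{leading} term instead. Since $|C|-1\ge 0$,
\[
|C|(|C|-1)\ \le\ |S_E|(|C|-1),
\]
and hence
\[
\sum_{v\in C} d_G(v)\ \le\ |S_E|(|C|-1)+|C|\,|S_V|+|S_E|
\ =\ |C|\bigl(|S_E|+|S_V|\bigr)\ \le\ k\,|C|,
\]
so some vertex of $C$ has degree at most $k$. This is the averaging reformulation of the paper's proof, which argues by contradiction: assuming every degree is at least $k+1$, each vertex of $C$ sends at least $k+1-|S_V|-(|C|-1)$ edges into $S_E$, giving $|C|(k+1-|S_V|-(|C|-1))\le|S_E|\le k-|S_V|$, which is incompatible with $|C|\le|S_E|\le k-|S_V|$. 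The two arguments are the same count read forwards versus backwards.
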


\begin{proof}
Suppose that the vertices of $C$ all have degree at least $k+1$. Then each sends at least $k+1-|S_V| - (|C|-1)$ edges to $G-S-C$. This means that 
\[
|C|(k+1-|S_V| - (|C|-1))\leq |S_E| \leq k-|S_V|.
\]
So $|C|(k-|S_V| - |C|+1)\leq k-|S_V|-|C|$, which, as $|C|\geq 1$, is only possible if both sides of the inequality are negative, that is, if
$|C|>k-|S_V|$. But this  is impossible, as $|C|\leq |S_E|\leq k-|S_V|$.
\end{proof}

As usal, the edge-connectivity of a graph $G$ is denoted by $\lambda (G)$. 
Also, in order to make clear which underlying graph we are referring to, it will be useful to write $\partial_e^{G}H=\partial_e H$ for a region $H$ of a graph $G$.

 \begin{lemma}\label{lem:vefin}
 Let $k\in\N$, let $G$ be a $k$-edge-connected graph,  and let $C$ be an inclusion-minimal region of $G$  with the property that $C$ has a vertex $x$ so that  $|\partial_e^{G-x} (C-x)|=\lambda (G-x)<k$ and $C-x\neq\emptyset$. Suppose for each $y\in V(C)$, the graph $G-y$ has a cut of size $<k$. Then $C- x$ contains a vertex of degree $k$ (in $G$).
 \end{lemma}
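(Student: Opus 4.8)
The plan is to reduce everything to Lemma~\ref{lem:findthevertex}. Since $G$ is $k$-edge-connected and has at least two vertices, the edges at any vertex form an edge cut, so every vertex of $G$ has degree at least $k$. Hence it suffices to produce a set $S\subseteq V(G)\cup E(G)$ with $|S|\le k$ together with a component $C'$ of $G-S$ satisfying $C'\subseteq C-x$ and $|C'|\le |S_E|$: Lemma~\ref{lem:findthevertex} then yields a vertex of degree at most $k$ inside $C'\subseteq C-x$, which must in fact have degree exactly $k$.

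To set up, write $F:=\partial_e^{G-x}(C-x)$ and $t:=|F|=\lambda(G-x)<k$. In $G$ the edge-boundary of $C-x$ is the disjoint union $\partial_e^{G}(C-x)=F\cup E(x,C-x)$, and $k$-edge-connectivity forces $|\partial_e^{G}(C-x)|\ge k$; thus $x$ sends at least $k-t\ge 1$ edges into $C-x$. The natural candidate is $S:=\{x\}\cup F$, for which $S_V=\{x\}$, $S_E=F$ and $|S|=1+t\le k$. Because deleting $x$ and the edges of $F$ severs $C-x$ from $G-C$ entirely, the components of $G-S$ that meet $C-x$ are exactly the components of the induced subgraph on $C-x$.

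First I would simplify the structure of $C-x$. Assuming $t\ge 1$, I claim that $C-x$ is connected. Indeed, each component $C^{*}$ of the induced subgraph on $C-x$ has all its leaving edges (in $G-x$) going to $G-C$, so $\partial_e^{G-x}(C^{*})\subseteq F$; on the other hand $C^{*}$ is a nonempty proper vertex set of $G-x$, whence $|\partial_e^{G-x}(C^{*})|\ge\lambda(G-x)=|F|$ and therefore $\partial_e^{G-x}(C^{*})=F$. Two distinct components would then have the same, yet edge-disjoint, boundary $F$, which is impossible once $t\ge 1$. (The degenerate case $t=0$, in which $x$ is a cutvertex and $F=\emptyset$, has to be handled separately, replacing the witness $x$ by a neighbour $y\in C-x$ and working with a small cut of $G-y$ instead.) It now remains to show that $C-x$ is finite with $|C-x|\le t$; then $S=\{x\}\cup F$ and $C'=C-x$ complete the proof through Lemma~\ref{lem:findthevertex}.

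This size bound is where the inclusion-minimality of $C$ and the standing hypothesis that $\lambda(G-y)<k$ for every $y\in V(C)$ must be used together, and it is the part I expect to be the main obstacle. Suppose $C-x$ is infinite or has more than $t$ vertices. Choosing a suitable interior vertex $y\in C-x$ (for instance a neighbour of $x$) and the cut witnessing $\lambda(G-y)<k$, one obtains a min-cut side $A$ of $G-y$; uncrossing $A$ with $C-x$ by submodularity of the edge-boundary in $G$, while separately accounting for the edges incident with $x$ and with $y$ and for the inequalities $|\partial_e^{G}(A)|\ge k$ and $|\partial_e^{G}(C-x)|\ge k$, should produce a min-cut side strictly inside $C-x$ that, after re-adding the appropriate deleted vertex, is a region $\hat C\subsetneq C$ again enjoying the defining property, contradicting the minimality of $C$. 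The delicate point is that the two minimum cuts live in different graphs $G-x$ and $G-y$, so the submodular exchange must be carried out in $G$ and one must then verify that the smaller set obtained is genuinely a min-cut side of the relevant vertex-deleted graph (so that the defining property is inherited) and that it is nonempty and properly contained in $C$. I expect finiteness of $C-x$ to drop out of the same argument, since an infinite $C-x$ would give rise to a strictly decreasing chain of such regions (compare the ends produced in Lemma~\ref{lem:regions}), so that no inclusion-minimal $C$ could be infinite in the first place.
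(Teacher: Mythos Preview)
Your plan---reduce to Lemma~\ref{lem:findthevertex} with $S=\{x\}\cup F$ by establishing $|C-x|\le t$---is exactly the paper's, and your observation that $C-x$ is connected (for $t\ge 1$) is correct though not actually needed. The gap lies in the uncrossing step, specifically in the choice of $y$. You suggest taking ``for instance a neighbour of $x$'', but the property that makes the argument go through is $N(y)\subseteq V(C)$. The paper is organised around precisely this dichotomy: either every vertex of $C-x$ has a neighbour in $D:=G-x-C$, in which case $|C-x|\le |F|=|S_E|$ and Lemma~\ref{lem:findthevertex} applies immediately; or some $y\in C-x$ has all its neighbours in $C$, and then one contradicts the minimality of $C$.

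Here is why $N(y)\subseteq V(C)$ is essential. Let $F'$ be a minimum cut of $G-y$, with sides $A\ni x$ and $B$. Since $G$ is $k$-edge-connected, $F'$ is not a cut of $G$, so $y$ has neighbours in both $A$ and $B$; because those neighbours all lie in $C$ we get $B\cap C\neq\emptyset$, so that $(B\cap C)\cup\{y\}\subsetneq C$ is a genuine candidate region witnessed by $y$, and minimality of $C$ forces $|\partial_e^{G-y}(B\cap C)|>|F'|$. A short edge count then gives $|\partial_e^{G-x}(A\cap D)|<\lambda(G-x)$, whence $A\cap D=\emptyset$ and $A\cup\{y\}\subsetneq C$ is yet another candidate region witnessed by $y$, the final contradiction. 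If $y$ is merely a neighbour of $x$, all of $y$'s neighbours in $B$ could lie in $D$; then $B\cap C$ may be empty and neither half of the argument is available. Note also that this dichotomy makes your separate treatment of $t=0$ unnecessary: when $F=\emptyset$ no vertex of $C-x$ can have a neighbour in $D$, so one is automatically in the second case and the contradiction runs unchanged.
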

  
 \begin{proof} 
If every vertex of $C-x$ has a neighbour in $D:=G-x-C$ then we may apply Lemma~\ref{lem:findthevertex} with $S:=\{ x\}\cup\partial_e ^{G-x} (C-x)$ and are done. So let us assume that there is a vertex $y\in V(C-x)$ all of whose neighbours lie in $C$. By assumption, $G-y$ has a cut $F$ of size $\lambda (G-y)<k$,
which splits $G-y$ into $A$ and $B$, with $x\in V(A)$, say. See Figure~\ref{fig:ABCDxy}.

 \begin{figure}[ht]
       \centering
       \psfrag{A}{$A$}
\psfrag{B}{$B$}
\psfrag{C}{$C$}
\psfrag{D}{$D$}
\psfrag{x}{$x$}
\psfrag{y}{$y$}
     \includegraphics[scale=0.35]{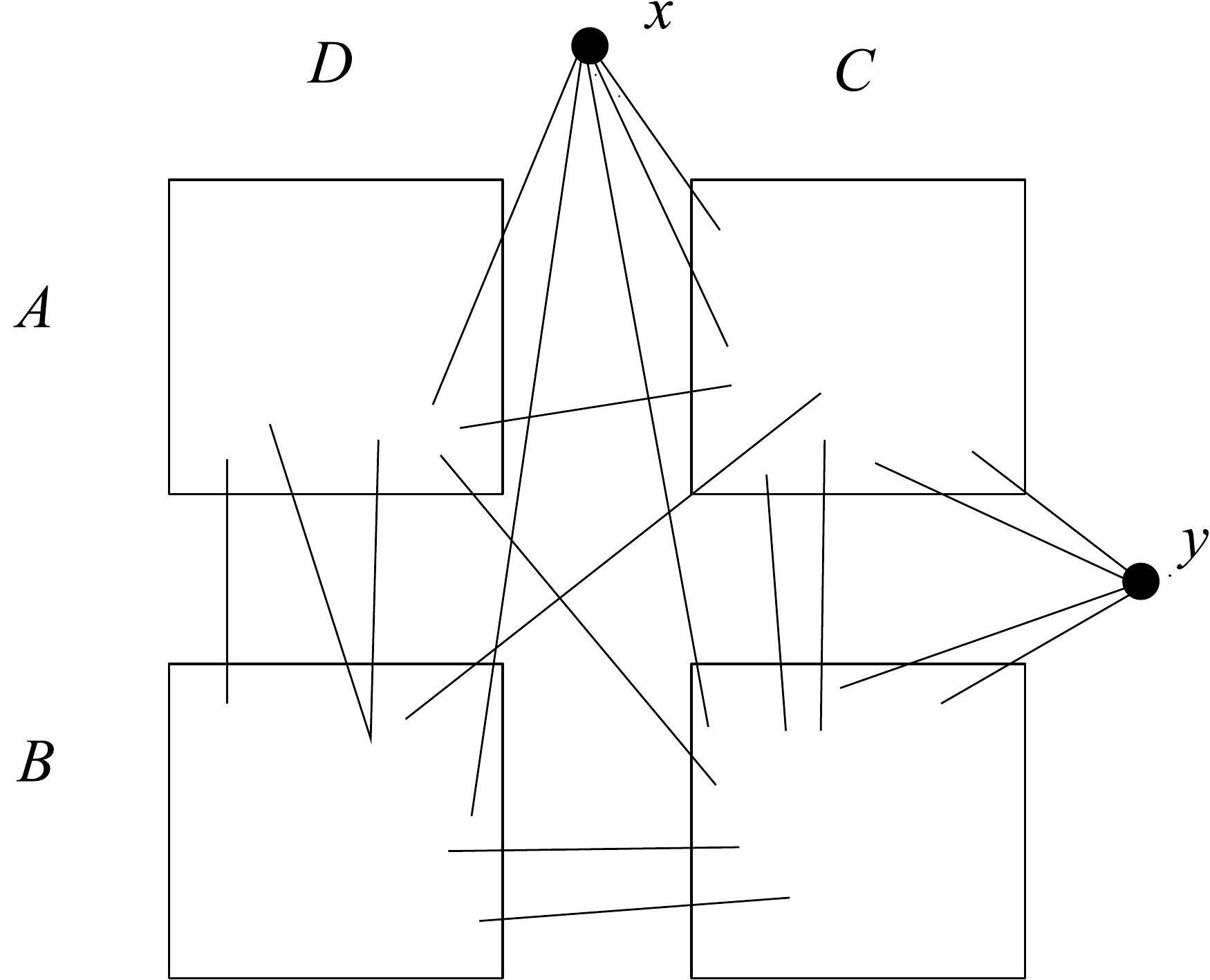}
       \caption{The graph $G$ with $A$, $B$, $C$, $D$, $x$ and $y$.\label{fig:ABCDxy}}
 \end{figure}

Since $G$ is $k$-edge-connected, $F$ is not a cut of $G$. Hence $y$ has neighbours in both $A$ and $B$. Thus, as $N(y)\subseteq V(C)$, and $x\in V(A)$, it follows that $B\cap C\neq\emptyset$. Consider the region $C'$ induced by $B\cap C$ and $y$. Because $x\in V(A)$, we know that $C'\subsetneq C$.

 So, by the choice of $C$ and $x$ we may  assume that  $|\partial_e^{G-y} (B\cap C)|>\lambda (G-y)=|F|$. Thus,
\begin{align*}
|\partial_e^{G-x} (A\cap D)| & \leq |\partial_e^{G-x} (C-x)|+|F|-|\partial^{G-y}_e (B\cap C)| \\ & < |\partial_e^{G-x} (C-x)|\\ & =\lambda (G-x),
\end{align*}
implying that $A\cap D=\emptyset$. That is, $A\cup y\subsetneq C$ (here we use again that $B\cap C\neq\emptyset$). As $|F|=\lambda (G-y)<k$, this contradicts the minimality of~$C$.
\end{proof}

Any finite vertex-minimally $k$-edge-connected graph $G$ clearly has an inclusion-minimal region  $C$ as in  Lemma~\ref{lem:vefin}. Thus  Theorem~1 (d) follows at once from  Lemma~\ref{lem:vefin}. Applying Lemma~\ref{lem:vefin} to any inclusion-minimal region with the desired properties that is contained in $G-(C-x)$ in order to find a second vertex of small degree in $G$, we get:

\begin{corollary}[Theorem~2 (d)]
 Let $G$ be a finite  vertex-minimally $k$-edge-connected graph. Then $G$ has at least two vertices of degree $k$.
\end{corollary}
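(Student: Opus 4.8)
The final statement to prove is the Corollary (Theorem~2 (d)): a finite vertex-minimally $k$-edge-connected graph $G$ has at least two vertices of degree $k$. The plan is to obtain both vertices by two applications of Lemma~\ref{lem:vefin}, carefully arranging the second application so that it produces a vertex distinct from the first. The essential groundwork is that $G$ is finite, so inclusion-minimal regions with any prescribed property exist whenever a region with that property exists at all; this is what lets me invoke the lemma.

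First I would verify that $G$ admits an inclusion-minimal region $C$ of the kind required by Lemma~\ref{lem:vefin}. Since $G$ is vertex-minimally $k$-edge-connected, $G$ itself is $k$-edge-connected, and for \emph{every} vertex $x$ the graph $G-x$ fails to be $k$-edge-connected, i.e.\ $\lambda(G-x)<k$. Picking any such $x$, a minimum cut of $G-x$ realised as an edge-boundary $\partial_e^{G-x}(C-x)$ exhibits a region with the stated property (after passing to an inclusion-minimal such region, which exists by finiteness). The hypothesis of Lemma~\ref{lem:vefin} that $G-y$ has a cut of size $<k$ for each $y\in V(C)$ is exactly the vertex-minimality of $G$ applied to each vertex. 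Hence Lemma~\ref{lem:vefin} applies and $C-x$ contains a vertex $v_1$ of degree $k$ in $G$; this is the first desired vertex.

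For the second vertex I would restrict attention to the graph region $G-(C-x)$ and look for an inclusion-minimal region $C'$ there satisfying the same three properties (the existence of a vertex $x'$ with $|\partial_e^{G-x'}(C'-x')|=\lambda(G-x')<k$ and $C'-x'\neq\emptyset$, together with the $G-y$-cut condition for all $y\in V(C')$). The point is that $v_1\in C-x$, so a vertex of degree $k$ produced inside $C'-x'\subseteq G-(C-x)$ is automatically distinct from $v_1$. Applying Lemma~\ref{lem:vefin} to $C'$ then yields the second vertex $v_2$ of degree $k$, and $v_1\neq v_2$, completing the proof.

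The main obstacle I anticipate is confirming that a suitable region $C'$ genuinely exists inside $G-(C-x)$ and that Lemma~\ref{lem:vefin}'s hypotheses transfer there unchanged. The vertex-minimality condition ``$G-y$ has a cut of size $<k$'' is a property of $G$ as a whole, so it survives for every $y\in V(C')$; the delicate part is producing a vertex $x'$ witnessing $\lambda(G-x')<k$ whose associated minimal region lies in $G-(C-x)$. Here finiteness and the global vertex-minimality of $G$ should again guarantee such a witness, since some minimum cut of $G-x'$ must carve out a region disjoint from $C-x$; one should check that the disconnection caused by removing $x'$ cannot be confined entirely to $C-x$, which follows because $C-x$ together with its boundary already sits on one side and the rest of $G$ remains. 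Modulo this bookkeeping, the two-fold application of Lemma~\ref{lem:vefin} delivers the corollary.
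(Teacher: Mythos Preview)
Your approach is exactly the paper's: apply Lemma~\ref{lem:vefin} once to an inclusion-minimal region $C$ (which exists by finiteness) to get $v_1\in C-x$, then apply it again to an inclusion-minimal such region inside $G-(C-x)$ to get a distinct $v_2$. The obstacle you flag dissolves more simply than you suggest: the \emph{same} vertex $x$ witnesses that $D:=G-(C-x)$ itself has the required property, since $\partial_e^{G-x}(D-x)$ is the very same cut of size $\lambda(G-x)<k$, so an inclusion-minimal $C'\subseteq D$ with the property exists by finiteness.
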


This means that for a proof of Theorem~\ref{thm:infQuant} (d) we only need to worry about the infinite regions, which is accomplished in the next lemma.

\begin{lemma}\label{lem:noC}
Let $k\in\N$, let $G$ be a vertex-minimally $k$-edge-connected graph and let $D$ be a region of $G$. Let $x\in V(D)$ such that $|\partial_e^{G-x}(D-x)|=\lambda (G-x)<k$. Suppose $G$ has no inclusion-minimal region $C\subseteq D$ with the property that $C$ contains a vertex $y$ so that  $|\partial_e^{G-y}(C-y)|=\lambda (G-y)<k$. Then $G$ has an end of vertex-degree $\leq k$ with rays in $D$.
\end{lemma}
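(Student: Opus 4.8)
The plan is to produce an infinite, strictly descending, ``interior-nested'' sequence of regions inside $D$, each with small vertex-boundary, and then feed it to Lemma~\ref{lem:regions}(i). Throughout, call a region $C$ a \emph{property-$P$ region} if $C$ contains a vertex $y$ with $|\partial_e^{G-y}(C-y)|=\lambda(G-y)<k$; note that $D$ itself is a property-$P$ region, via $x$.

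First I would record the crucial bound that every property-$P$ region $C$ satisfies $|\partial_v C|\le k$. Indeed, if $y$ witnesses property $P$ for $C$, then every vertex $u\in\partial_v C\setminus\{y\}$ has a neighbour in $G-C$, and the corresponding edge lies in $\partial_e^{G-y}(C-y)$, since it joins a vertex of $C-y$ to a vertex of $(G-y)-(C-y)$. Distinct such $u$ yield distinct edges, so $|\partial_v C\setminus\{y\}|\le|\partial_e^{G-y}(C-y)|\le k-1$, whence $|\partial_v C|\le k$. This is exactly what will make hypothesis (i) of Lemma~\ref{lem:regions} available for any chain of property-$P$ regions, and hence deliver the bound $d_v(\omega)\le k$ at the end.

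Next I would observe that $V(D)$ must be infinite: were it finite, the nonempty family of property-$P$ regions contained in $D$ would be a finite poset under inclusion and would therefore have a minimal element, contradicting the hypothesis. Using that same hypothesis — no property-$P$ region $\subseteq D$ is inclusion-minimal — I can pass from any property-$P$ region to a strictly smaller one, and so build an infinite strictly descending chain of property-$P$ regions below $D$. The aim is then to thin this chain to a subsequence $D=D_0\supseteq D_1\supseteq\cdots$ with $D_{i+1}\subseteq D_i-\partial_v D_i$; since each $\partial_v D_i$ is finite of size $\le k$, it suffices to find, for every $i$, a later region of the chain disjoint from $\partial_v D_i$. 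Lemma~\ref{lem:regions}(i) then produces an end $\omega$ with a ray in every $D_i\subseteq D$, so $\omega$ has a ray in $D$ and $d_v(\omega)\le k$, as required.

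The main obstacle is precisely this interior-nesting step: a priori the descending chain could accumulate on a fixed vertex (or on a persistent boundary vertex) $b\in\bigcap_i V(D_i)$, in which case no later region avoids $\partial_v D_i$ and the reduction to Lemma~\ref{lem:regions} fails. I would rule this out by exploiting the minimum-cut structure of property-$P$ regions: choosing each smaller region as the witness together with a single component of $G-y$ on the small side of a minimum cut forces boundary vertices to be shed upon descent, so that the finite boundaries migrate off any fixed finite set. Where a naive pigeonhole extraction on the $\le k$ boundary vertices still threatens to pin a vertex $b$ in all the regions, the minimum-degree bound $\delta(G)\ge k$ coming from $k$-edge-connectivity should preclude such degenerate accumulation. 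Verifying that the regions can indeed be chosen to escape every fixed vertex is the step I expect to require the most care.
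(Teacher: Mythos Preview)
Your overall plan---build a nested sequence of regions with vertex-boundary $\le k$ and apply Lemma~\ref{lem:regions}(i)---is the same as the paper's, and your bound $|\partial_v C|\le k$ for property-$P$ regions is correct and useful. The gap is exactly where you flag it: the interior-nesting step is not actually carried out, and the sketch you give for it does not work.

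Descending abstractly via the ``no inclusion-minimal property-$P$ region'' hypothesis gives you \emph{some} strictly smaller property-$P$ region, but with no control over its location; nothing prevents every region in the resulting chain from containing a fixed boundary vertex. Your suggestion that $\delta(G)\ge k$ rules out accumulation on a vertex is not justified, and I do not see how to make it work: a high-degree vertex can perfectly well sit in the boundary of every region of a descending chain. The remark about ``choosing each smaller region as the witness together with a single component'' is on the right track, but you have not said how to \emph{choose the witness $y$} so that the resulting region avoids $\partial_v D_{i-1}$---and that is the whole point.

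The paper's device for this is concrete and worth noting. It does \emph{not} use the non-minimality hypothesis to descend. Instead, given $D_{i-1}$, it takes for each pair of vertices of $\partial_v D_{i-1}$ a family of $k$ edge-disjoint paths between them (these exist by $k$-edge-connectivity); let $H$ be the finite union of all these paths. Since $D_{i-1}$ is infinite, pick $y\in D_{i-1}-H$. Now the vertex-minimality of $G$ yields a cut of size $<k$ in $G-y$, with connected sides $A$, $B$. Because $y\notin V(H)$, this cut cannot separate two vertices of $\partial_v D_{i-1}$ (they are joined by $k$ edge-disjoint paths in $G-y$), so all of $\partial_v D_{i-1}$ lies in one side, say $A$; hence $B\subseteq D_{i-1}-\partial_v D_{i-1}$, and $D_i:=B\cup y$ is interior-nested by construction. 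The ``no minimal property-$P$ region'' hypothesis is used only at the end of each step, to conclude that $D_i$ is infinite (otherwise it would contain a minimal one). The missing idea in your proposal is precisely this path-system trick for pinning $\partial_v D_{i-1}$ to one side of any small cut avoiding $H$.
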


\begin{proof}
 We construct a sequence of infinite regions $D_i$ of $G$, starting with $D_0:=D$ which clearly is infinite. Our regions will have the property that $D_i\subseteq D_{i-1}-\partial_v D_{i-1}$, which means that we may apply Lemma~\ref{lem:regions} (i) in order to find an end as desired.
 
  In step $i\geq 1$,
for each pair of vertices in $\partial_v^GD_{i-1}$, take a set of $k$ edge-disjoint paths joining them: the union of all these paths gives a finite subgraph $H$ of $G$. Since $D_{i-1}$ was infinite, $D_{i-1}-H$ still is, and thus contains a vertex $y$. 

Since $G$ is  vertex-minimally $k$-edge-connected, $G-y$ has a cut of size less than $k$, which splits $G-y$ into $A$ and $B$, say, which we may assume to be connected. Say $A$ contains a vertex of $\partial_v^GD_{i-1}$. Then $\partial_v^GD_{i-1}\subseteq V(A)$ (since $y\notin V(H)$). Thus, as $y$ has neighbours in both $A$ and $B$ (because $G$ is $k$-connected), we obtain that $B\subseteq D_{i-1}$. Observe that $D_i:=B\cup y$ is infinite, as otherwise it would contain an inclusion-minimal region $C$ as prohibited in the statement of the lemma.
\end{proof}

We finally prove Theorem~\ref{thm:infQuant} (d). 

\begin{proof}[Proof of Theorem~\ref{thm:infQuant} (d)]
Let $x\in V(G)$, and let $F$ be a cut of $G-x$ with  $|F|=\lambda (G-x)<k$. Say $F$ splits $G-x$ into $A_1$ and $A_2$. For $i=1,2$, if $A_i$  contains an inclusion-minimal region $C$ such that $C$ has a vertex $y$ with the property that $|\partial_e^{G-y}(C-y)|=\lambda (G-y)<k$, we use Lemma~\ref{lem:vefin} to find a vertex of degree at most $k$ in $C-y\subseteq A_i$.
On the other hand, if $A_i$ does not contain such a region, we use Lemma~\ref{lem:noC} to find an end of the desired vertex-degree. 
\end{proof}

\bibliographystyle{plain}
\bibliography{graphs}

\small
\vskip1.5cm 

\noindent Maya Stein 
{\tt <mstein@dim.uchile.cl>}\\
Centro de Modelamiento Matem\' atico,
Universidad de Chile,
Blanco Encalada, 2120, 
Santiago,
Chile.

\end{document}